\theoremstyle{plain}
\newtheorem{thm}{Theorem}[section]
\newtheorem{prop}{Proposition}[section]
\newtheorem{defn}{Definition}[section]
\newtheorem{ex}[thm]{Example}
\newcommand{\begintheorem}{\addtocounter{equation}{1}\begin{theorem}}
\newcommand{\beginlemma}{\addtocounter{equation}{1}\begin{lemma}}
\newcommand{\beginproposition}{\addtocounter{equation}{1}\begin{proposition}}
\newcommand{\begindefinition}{\addtocounter{equation}{1}\begin{definition}}
\newcommand{\begincorollary}{\addtocounter{equation}{1}\begin{corollary}}
\begin{document} 
%
%
%
%
%
%
%
%
%

\title[ Some applications of uniformly $ p $-convergent sets]
{Some  applications of uniformly $ p $-convergent sets}
\author {M.\ Alikhani.}
\address{Department of Mathematics, University of Isfahan}
\email{m2020alikhani@ yahoo.com}
\subjclass{Primary: 46B25; Secondary: 46E50, 46G05.}
\keywords{ uniformly completely continuous subsets; weakly sequentially continuous differentiable mappings; Dunford-Pettis property of order $ p $;}
\begin{abstract}
 In this paper,  we introduce a new class of subsets of bounded linear operators between Banach spaces  which  is $ p $-version of the uniformly completely continuous sets.\ Then, we study the relationship between these sets with the equicompact sets.\ Moreover, we introduce the concept of weakly $ p $-sequentially continuous differentiable mappings and   obtain some characterizations of these mappings.\ Finally, we
give a  factorization result for differentiable mappings through p-convergent
operators.
\end{abstract}
\maketitle
\section{Introduction}

Let $ X $ and $  Y$ be Banach spaces.\ 
The study uniformly completely continuous sets in the class of all bounded linear operators between Banach
spaces have been obtained in
recent years by several authors.\
The  research works of Cilia et al. \cite{ce2},
shows that if $ U \subseteq X $ is an open convex and
 $ f : U \rightarrow Y $ is a differentiable mapping whose derivative $ f^{\prime}   $ is uniformly
continuous on $ U $-bounded subsets of $ U $ ($f\in  C^{1u} (U,Y)$), then  $ f $
 takes weakly Cauchy  $ U $-bounded
sequences into  norm convergent sequences (in short, $ f\in C_{wsc}(U, Y ) $), 
 if and only if $ f^{\prime} $ takes Rosenthal and $ U $-bounded subsets
of $ U $ into uniformly completely continuous subsets of $  L(X, Y ).$\ 
For more information in this kinds of researches, we refer the reader to \cite{A,ce,ce3,ce4,DM,gg1,m, spd, spd1, spd2} and references therein.\\
Recently,  Chen et al. \cite{ccl1}, by introducing the notion $ p $-$ (V) $ sets, showed that an operator $ T:X\rightarrow Y $  is $ p $-convergent if and only if its adjoint $ T^{\ast} : Y^{\ast}\rightarrow X^{\ast}$
takes bounded subsets of $ Y^{\ast} $ into $p  $-$ (V) $ subset $ X^{\ast}. $\
Motivated by the above work and the research works of Cilia et al \cite{ce2,ce3,ce4}, we give similar
results for differentiable mappings.\ Here,
we  introduce the notion uniformly $ p $-convergent sets and we try answer to the  following interesting  questions:\
\begin{itemize}
\item 
For given a differentiable mapping $  f : U \rightarrow Y, $
under which conditions  its derivative $ f^{\prime} $ takes $  U$-bounded  weakly $ p $-precompact subsets of $  U$ into uniformly
$ p $-convergent subsets?\
\item 
For given a differentiable mapping $  f : U \rightarrow Y, $
under which conditions  its derivative $ f^{\prime} $ takes $  U$-bounded sets into uniformly $ p $-convergent sets?
\end{itemize}

This paper deals with the $ p $-version of uniformly completely continuous sets and weakly sequentially continuous differentiable mappings.\
In Section 2 of this article provides a wide range of definitions
and concepts in Banach spaces.\ These concepts are
mostly well known, but we need them in the sequel.\\
In Section 3, we define the concepts of uniformly $ p $-convergent sets and  weakly $ p $-sequentially continuous differentiable mappings.\  Also, we
apply the notion
 weakly equicompact sets in order to
 find a characterization for those Banach spaces in which the double dual of them
have  the  $p$-Schur property.\
Finally, we find some  equivalent conditions 
 for all  $ f \in C^{1u} (U,Y) $ 
 such that
 $ f^{\prime} $ takes $ U $-bounded and weakly $ p $-precompact subsets of $  U$ into uniformly
$ p $-convergent subsets of the class of all bounded linear operators from $ X $ to $ Y. $\\
 In the Section 4, we
apply the concept uniformly $ p $-convergent sets in order to find a factorization result for a differentiable mapping through
a  $ p $-convergent operator.\ 
\section{ Notions and Definitions}
Throughout this paper $ X,Y $ and $ Z$ will always denote real Banach spaces and  $ U $ is an open convex subset of $ X. $\ We denote the spaces
of all bounded linear operators, compact operators and weakly compact operators from $  X$ into $  Y$ 
by $ L(X, Y ) ,~K(X,Y)$ and $  W(X, Y ) ,$ respectively.\
The topological dual of $  X$ is denoted by $ X^{\ast} $ and
the adjoint of an operator $ T $ is denoted by $ T^{\ast} .$\ 
Also we use $ \langle x^{\ast},x \rangle $ or $ x^{\ast} (x)$ for the duality between $ x\in X$ and $ x^{\ast}\in X^{\ast} .$\ We denote the closed unit ball
of $  X$ and the identity operator on $  X$ by $ B_{X} $ and $ id_{X} $ respectively.\  $ p^{\ast} $ will always denote the conjugate number of $ p $ for
 $ 1\leq p<\infty; $ if $ p=1, $ $ \ell_{p^{\ast}} $ plays the role of $ c_{0}. $\ The unit coordinate vector in $ \ell_{p} $ (resp. $ c_{0} $ or $ \ell_{\infty} $) is denoted by $ e^{p}_{n} $(resp. $ e_{n} $).
In this paper $ 1\leq p<\infty, $ except for the
cases where we consider other assumptions.\\
To state our results, we need to recall some definitions.\
A sequence $ (x_{n})_{n} $ in $ X $ is called weakly $ p $-summable, if $ (x^{\ast}(x_{n}))_{n} \in \ell_{p}$ for each $ x^{\ast}\in X^{\ast} .$\ We denote by $ \ell_{p}^{w}(X) $
 the space
of all weakly $  p$-summable sequences in $ X; $ see \cite{djt}.\ 
 A bounded subset $ K $ of $ X $ is relatively weakly-$ p $-compact, if every sequence in $  K$ has a weakly-$ p $-convergent subsequence
with limit in $  X.$\
 A sequence $(x_{n})_{n}$ in $ X $ is called weakly
$ p $-Cauchy, provided that $ (x_{m_{k}}-x_{n_{k}})_{k}\in \ell_{p}^{w}(X) $  for any increasing sequences $ (m_{k})_{k} $
and $ (n_{k})_{k} $ of positive integers; see \cite{ccl}.\ A subset $ K$ of $ X $ is said to be weakly $ p$-precompact,
provided that every sequence from $ K $ has a weakly $ p$-Cauchy subsequence; see \cite{ccl}.\ Note that the
weakly $ \infty $-precompact sets are precisely the weakly precompact sets or Rosenthal sets.\ An operator $ T \in L(X,Y)$ is said to be
weakly $  p$-precompact, if $ T(B_{X}) $ is weakly $ p $-precompact.\ 
An  operator $ T \in L(X,Y)$ is called $ p$-convergent, if $ \displaystyle\lim_{n\rightarrow \infty} \parallel  T(x_{n})   \parallel=0$ for all $ (x_{n}) _{n}\in \ell_{p}^{w}(X).$\ We denote the space of all
$  p$-convergent operators from $ X $ into $ Y, $   by
$ C_{p}(X,Y) ;$ see \cite{cs}.\  If the identity operator on $ X $ is $ p $-convergent, we say that a Banach space $ X $ has
the $ p $-Schur property, which is equivalent to every weakly $  p$-compact subset of $  X$ is norm compact; see \cite{dm}.\
A Banach space $  X$ is said to have the Dunford-Pettis
property of $  p$ (in short, $ (DPP_{p}) $), if for any Banach space $ Y, $ every weakly compact operator
$ T : X \rightarrow Y $ is $  p$-convergent; see \cite{ccl}.\
A bounded subset $ K $ of $ X^{\ast} $ is a $ p $-$ (V ) $ set, if $ \displaystyle \lim_{n\rightarrow\infty}\displaystyle \sup_{x^{\ast}\in K}\vert x^{\ast}(x_{n}) \vert =0,$ 
for every $ (x_{n})_{n}\in \ell_{p}^{w}(X); $ see  \cite{ccl1}.\ Given $ x, y \in X, $  the
segment with bounds $ x $ and $ y$ denoted by $ I(x, y). $\ 
A set $ B\subset U $  is $ U $-bounded, if it is bounded and the distance 
between $ B$ and the boundary  of $ U$ is strictly positive; see \cite{ce3}.\  
  The space
of all differentiable mappings $ f : U \rightarrow Y $ whose derivative $ f^{\prime} : U \rightarrow L(X, Y ) $ is
uniformly continuous on $  U$-bounded subsets of $ U $ will be denoted by $ C^{1u} (U,Y);$ see \cite{ce2}.\
A set $ M \subset K(X, Y ) $ is equicompact, if there exists a null sequence $ (x^{\ast}_{n})_{n} $
 in $ X^{\ast} $ so that $ \parallel T(x)  \parallel \leq \sup_{n}\vert x^{\ast}_{n}(x)\vert$ for all $ x \in X $ and all $ T \in M,$ which is equivalent to  every bounded sequence $ (x_{n})_{n} $ in
$  	X$ has a subsequence $ (x_{k_{n}})_{n} $ such that $ (Tx_{k_{n}})_{n} $ is uniformly convergent
for $ T\in M; $ see \cite{spd}.\ For given a mapping $ f : U \rightarrow Y $ and a class $  \mathcal{M}$ of subsets of $ U $ such that every
singleton belongs to $  \mathcal{M},$ the mapping $  f$ is $  \mathcal{M}$-differentiable at $ x \in U, $ if there exists
an operator $ f^{\prime}(x) \in L(X,Y)$ such that
$$\lim_{\varepsilon\rightarrow 0}\frac{f(x+\varepsilon y)-f(x)-f^{\prime}(x)(\varepsilon y)}{\varepsilon} =0 $$
uniformly to $ y $ on each member of $ \mathcal{M} .$\ In this case, we write
$ f\in D_{\mathcal{M}}(x, Y) ;$ see \cite{gg1}.\ We say that
a mapping $ f $ is G$ \hat{a} $teaux differentiable at $  x,$ if
$ f\in D_{\mathcal{M}}(x, Y) $ where  $ \mathcal{M} $ is the class of all single-point subsets of $ X. $\ We also, say that $ f $ is Fr\'echet differentiable at $ x $ if $ f\in D_{\mathcal{M}}(x, Y) ,$  where $ \mathcal{M} $ is the
class of all bounded subsets of $ X.$

\section{Weakly $ p $-sequentially continuous differentiable mappings}
Here, we introduce the notion uniformly $ p $-convergent sets in $ L(X,Y) $ and  give some properties of these sets.\ Then,  we study the weakly $ p $-sequentially continuous differentiable mappings.\ 
\begin{defn}\label{d1}
Let $ K \subset L(X, Y ). $\ We say that $ K $ is a uniformly $ p $-convergent set, if  every 
$ (x_{n})_{n}\in \ell_{p}^{w} (X)$   converges uniformly on $ K, $ that is,
$$ \lim _{n} \sup_{T \in K} \Vert T(x_{n})\Vert =0.$$
\end{defn}
Note that, the uniformly $ \infty $-convergent sets in $ L(X,Y) $ are precisely  the uniformly completely
continuous sets; see \cite{spd}.\
Also, every uniformly $ q $-convergent subset of $ L(X,Y) $ is uniformly $ p $-convergent,
whenever $ 1\leq p<q\leq \infty. $\ It would be interesting to obtain conditions under which every uniformly
$ p $-convergent set in $ L(X,Y) $ is uniformly $ q $-convergent.\  In particular,  we obtain a characterization for those Banach spaces in which
uniformly $ p $-convergent sets in $ X^{\ast} $ are uniformly $ q $-convergent; see \cite{afz}.\\
The following example shows that, there exists a uniformly $ p $-convergent subset of $ L(\ell_{2},Y) $ so that 
it is not uniformly $ q $-convergent.
\begin{ex} \label{e1}
Let $ X=\ell_{2} $ and $ Y$ be an arbitrary Banach space.\
Since $ \ell_{2} $ does not have $ 2 $-Schur property, $ B_{\ell_{2}} $ is not a $ 2$-$ (V ) $ set in $ \ell_{2} $.\ Therefore $B_{\ell_{2}} $ 
is not uniformly $ 2 $-convergent subsets of $ \ell_{2} .$\
On the other hand, $ \ell_{2} $ contain no copy of $ c_{0} .$\ Therefore, $ \ell_{2} $ has the $ 1 $-Schur property; see \cite{dm}.\ Hence,
$ B_{\ell_{2}} $ is a $ 1 $-$ (V ) $ set and so, $ B_{\ell_{2}} $ is a uniformly $ 1 $-convergent subsets of $ \ell_{2} .$\ Now, let $ 0\neq y_{0}\in B_{Y} $ and
$ S : \mathbb{R}\rightarrow Y $ be the operator
given by $ S(\lambda) := \lambda y_{0}~(\lambda \in \mathbb{R}). $\ Define an operator $ T : \ell_{2}\rightarrow L(\ell_{2}, Y ) $ by
$T (\phi)(h) := \phi(h) y_{0}, $ for $ \phi \in \ell_{2}^{\ast}=\ell_{2} $ and $ h \in \ell_{2}. $\ Then
$$ \parallel T(\phi) \parallel=\sup_{h\in B_{\ell_{2}}}\parallel T(\phi) (h) \parallel=\sup_{h\in B_{\ell_{2}}}\parallel \phi(h) y_{0} \parallel=\parallel \phi \parallel .$$ 
Since, uniformly $ p $-convergent sets are stable under isometry,
there exists a uniformly $ 1 $-convergent subset of $ L(\ell_{2},Y) $ such  that 
it is not uniformly $ 2 $-convergent.\
\end{ex}
In the following result, we give some  properties of uniformly $ p $-convergent sets.
\begin{prop}\label{p1} 
$ \rm{(i)} $ Every subset of a uniformly  $ p $-convergent set in $ L(X,Y) $ is uniformly $ p $-convergent.\\
$ \rm{(ii)} $ Absolutely closed convex hull of a uniformly $ p $-convergent set in $ L(X,Y) $ is uniformly $ p $-convergent.\\
$ \rm{(iii)} $ If $ K_{1},\cdot \cdot\cdot, K_{n} $ are uniformly $ p $-convergent sets in $ L(X,Y), $ then $ \displaystyle\bigcup_{i=1}^{n} K_{i}$ and $ \displaystyle \sum_{i=1}^{n} K_{i} $ are uniformly $ p $-convergent sets in $ L(X,Y). $\\ 
$ \rm{(iv)} $ Every  relatively compact subset of
$C_{p}(X, Y ) $ 
is  uniformly $ p $-convergent.\\
$ \rm{(v)} $
If $ K \subset L(X, Y ) $ is a uniformly $ p $-convergent set, then $ K \subset C_{p}(X, Y ) .$\
\end{prop}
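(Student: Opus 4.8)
The plan is to verify each of the five items of Proposition~\ref{p1} directly from Definition~\ref{d1}, proceeding from the most elementary to the ones that require a little more work. For item~(i), if $K' \subseteq K$ and $K$ is uniformly $p$-convergent, then for any $(x_n)_n \in \ell_p^w(X)$ we have $\sup_{T \in K'}\|T(x_n)\| \le \sup_{T \in K}\|T(x_n)\| \to 0$, so $K'$ is uniformly $p$-convergent. For item~(iii), the union case follows since $\sup_{T \in \bigcup_i K_i}\|T(x_n)\| = \max_{1\le i\le n}\sup_{T\in K_i}\|T(x_n)\|$, a finite maximum of null sequences; the sum case follows from the triangle inequality, since any $T \in \sum_i K_i$ can be written $T = \sum_i T_i$ with $T_i \in K_i$, hence $\|T(x_n)\| \le \sum_{i=1}^n \|T_i(x_n)\| \le \sum_{i=1}^n \sup_{S \in K_i}\|S(x_n)\|$, and the right-hand side is independent of $T$ and tends to $0$.

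For item~(ii), I would first show closure under taking the balanced convex hull: if $T = \sum_j \lambda_j T_j$ with $T_j \in K$, $\sum_j |\lambda_j| \le 1$, then $\|T(x_n)\| \le \sum_j |\lambda_j|\,\|T_j(x_n)\| \le \sup_{S \in K}\|S(x_n)\|$, so the balanced convex hull of $K$ satisfies the same uniform bound as $K$ itself. It then remains to pass to the norm closure: if $T$ is a norm limit of operators $T^{(k)}$ each lying in the balanced convex hull, then given $\varepsilon > 0$ choose $k$ with $\|T - T^{(k)}\| < \varepsilon$ and note $\sup_n \|x_n\| < \infty$ (a weakly $p$-summable sequence is bounded), so $\|T(x_n)\| \le \|T^{(k)}(x_n)\| + \varepsilon \sup_m \|x_m\|$; taking $\limsup_n$ and then letting $\varepsilon \to 0$ gives the claim. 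This is a routine $\varepsilon/2$ argument; the only point to be careful about is uniformity in $T$ across the closure, which is handled precisely because the bound $\sup_{S\in K}\|S(x_n)\|$ does not depend on $T$.

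For item~(iv), let $K \subseteq C_p(X,Y)$ be relatively compact in operator norm and let $(x_n)_n \in \ell_p^w(X)$, with $c := \sup_n \|x_n\| < \infty$. Fix $\varepsilon > 0$; by total boundedness choose a finite $\varepsilon$-net $T_1,\dots,T_m \in K$. Since each $T_i$ is $p$-convergent, $\|T_i(x_n)\| \to 0$, so there is $N$ with $\|T_i(x_n)\| < \varepsilon$ for all $i$ and all $n \ge N$. For arbitrary $T \in K$ pick $T_i$ with $\|T - T_i\| < \varepsilon$; then for $n \ge N$, $\|T(x_n)\| \le \|T_i(x_n)\| + \varepsilon c < \varepsilon(1 + c)$, and this bound is uniform in $T \in K$, giving $\lim_n \sup_{T\in K}\|T(x_n)\| = 0$. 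Item~(v) is then immediate: if $K$ is uniformly $p$-convergent, then in particular for each fixed $T \in K$ and each $(x_n)_n \in \ell_p^w(X)$ we have $\|T(x_n)\| \le \sup_{S \in K}\|S(x_n)\| \to 0$, so $T \in C_p(X,Y)$.

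The main obstacle, such as it is, lies in item~(iv): one must correctly combine the compactness of $K$ (to reduce to finitely many operators) with the defining property of $\ell_p^w(X)$-sequences (boundedness) so that the error term $\varepsilon c$ is genuinely uniform over all of $K$. The rest of the proposition is bookkeeping with the triangle inequality and the observation, used repeatedly, that the quantity $\sup_{S \in K}\|S(x_n)\|$ serves as a $T$-independent majorant.
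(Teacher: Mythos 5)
Your proof is correct. The paper states Proposition~\ref{p1} without giving any proof, and your verifications --- the $T$-independent majorant $\sup_{S\in K}\Vert S(x_{n})\Vert$ for (i), (iii) and (v), the $\varepsilon$-argument (using boundedness of weakly $p$-summable sequences and uniformity of the bound over the hull) for the closed absolutely convex hull in (ii), and the finite $\varepsilon$-net reduction to finitely many $p$-convergent operators in (iv) --- are precisely the routine arguments the author evidently intended the reader to supply.
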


\begin{thm}\label{t1}
Let $ X $ be a Banach space.\ If there exists a Banach space $  Y$ so that every uniformly $ p $-convergent
 set of $  K(X, Y )  $ is equicompact, then $ C_{p}(X, Y ) =K(X,Y). $ 
\end{thm}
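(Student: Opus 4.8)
The plan is to establish the two inclusions $K(X,Y)\subseteq C_{p}(X,Y)$ and $C_{p}(X,Y)\subseteq K(X,Y)$ separately. The first is routine: a compact operator is completely continuous, and since for $1\le p<\infty$ every member of $\ell_{p}^{w}(X)$ is a bounded weakly null sequence, a compact operator sends such a sequence to a norm‑null one, hence it is $p$-convergent. Thus the whole content of the theorem lies in the reverse inclusion. Here one immediately meets the following obstruction: if $T\in C_{p}(X,Y)$ then the singleton $\{T\}$ is a uniformly $p$-convergent set (that is precisely what $p$-convergence of $T$ asserts), but the hypothesis only concerns uniformly $p$-convergent \emph{subsets of $K(X,Y)$}, and a priori we do not yet know that $T$ is compact; so applying the hypothesis directly to $\{T\}$ is circular.

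To get around this, I would first extract from the hypothesis the purely dual statement that \emph{every $p$-$(V)$ subset of $X^{\ast}$ is relatively norm compact}. We may assume $Y\neq\{0\}$, fix $y_{0}\in Y$ with $\|y_{0}\|=1$, and consider the isometric embedding $\Phi\colon X^{\ast}\to K(X,Y)$ defined by $\Phi(x^{\ast})(x)=x^{\ast}(x)\,y_{0}$, whose range consists of rank‑one (hence compact) operators. For a bounded set $H\subseteq X^{\ast}$ one checks immediately that, for every $(x_{n})_{n}\in\ell_{p}^{w}(X)$, $\sup_{x^{\ast}\in H}\|\Phi(x^{\ast})(x_{n})\|=\sup_{x^{\ast}\in H}|x^{\ast}(x_{n})|$, so $H$ is a $p$-$(V)$ set if and only if $\Phi(H)$ is a uniformly $p$-convergent subset of $K(X,Y)$. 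By the hypothesis $\Phi(H)$ is then equicompact; by the sequential description of equicompactness recalled in Section~2, every bounded sequence $(x_{n})_{n}$ in $X$ has a subsequence $(x_{k_{n}})_{n}$ such that $(x^{\ast}(x_{k_{n}}))_{n}$ converges uniformly over $x^{\ast}\in H$. Equivalently, the evaluation operator $R_{H}\colon X\to\ell_{\infty}(H)$, $R_{H}(x)=(x^{\ast}(x))_{x^{\ast}\in H}$, is compact; hence $R_{H}^{\ast}$ is compact, and since $R_{H}^{\ast}$ sends each coordinate‑evaluation functional $\delta_{x^{\ast}}$ (which lies in $B_{\ell_{\infty}(H)^{\ast}}$) to $x^{\ast}$, the set $H=R_{H}^{\ast}(\{\delta_{x^{\ast}}:x^{\ast}\in H\})$ is relatively norm compact in $X^{\ast}$.

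Granting this, the theorem closes quickly. Let $T\in C_{p}(X,Y)$. By the characterization of $p$-convergent operators of Chen et al.\ \cite{ccl1} recalled in the Introduction, $T^{\ast}$ maps $B_{Y^{\ast}}$ into a $p$-$(V)$ subset of $X^{\ast}$; by the previous step $T^{\ast}(B_{Y^{\ast}})$ is relatively norm compact, so $T^{\ast}$ is a compact operator, and therefore $T$ is compact by Schauder's theorem, i.e.\ $T\in K(X,Y)$. Hence $C_{p}(X,Y)\subseteq K(X,Y)$, which together with the first inclusion gives $C_{p}(X,Y)=K(X,Y)$.

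The step I expect to be the main obstacle is the middle one: transferring the hypothesis, which lives inside $K(X,Y)$, into usable information about $X^{\ast}$. The key device is the realization of $p$-$(V)$ sets as uniformly $p$-convergent families of rank‑one compact operators through $\Phi$, followed by the deduction of relative norm compactness of $H$ from equicompactness of $\Phi(H)$ via the compactness of the evaluation operator $R_{H}$ and a duality argument; once this is in place, the remaining steps are only an appeal to the already‑quoted adjoint characterization of $p$-convergence and to Schauder's theorem.
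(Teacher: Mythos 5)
Your proposal is correct and follows essentially the same route as the paper: both reduce the theorem to showing that every uniformly $p$-convergent (equivalently, $p$-$(V)$) subset of $X^{\ast}$ is relatively norm compact, by tensoring such a set with a fixed norm-one $y_{0}\in Y$ to obtain a uniformly $p$-convergent subset of $K(X,Y)$ and then invoking the equicompactness hypothesis. The only difference is that you prove inline the two ingredients the paper simply cites — the reduction via the adjoint characterization of $p$-convergent operators together with Schauder's theorem (the content of Theorem 2.4 of \cite{afz}) and the passage from equicompactness of $H\otimes y_{0}$ to relative compactness of $H$ via the evaluation operator into $\ell_{\infty}(H)$ (the content of Lemma 2.1 of \cite{spd1}).
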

\begin{proof} Since the $ p $-$ (V) $ sets in $ X^{\ast} $ coincides with the uniformly $ p $-convergent  subsets of $ X^{\ast}, $ it is enough to show that every uniformly $ p $-convergent  subset $  M$ of $ X^{\ast} $ is relatively
compact; see {\rm (\cite[Theorem 2.4]{afz})}.\ For this purpose, consider $ y_{0}\in S_{Y} $ and put $ H = M\otimes y_{0}. $\ Obviously, $  H$
is a uniformly $ p $-convergent subset of $ K(X, Y ). $\ Hence, by the hypothesis,
$ H $ is equicompact, which yields the equicompactness of $  M$ as a subset of
$ K(X, \mathbb{R}) .$\  Therefore, an  application of {\rm (\cite[Lemma 2.1]{spd1})} yields the result.
\end{proof}
As an immediate consequence of Proposition 2.2 of \cite{spd}, one can
conclude the following result.
\begin{prop}\label{p2}
If $  B_{X}$ is weakly $ p $-precompact, then a bounded subset $ M$ of $ K(X, Y )$ is  equicompact if and only if
 $ M $ is uniformly $ p $-convergent.
\end{prop}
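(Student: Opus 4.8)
The plan is to prove the two implications separately, noting that only ``$M$ uniformly $p$-convergent $\Rightarrow$ $M$ equicompact'' will use the hypothesis that $B_X$ is weakly $p$-precompact; the reverse implication is valid for every $X$. Before starting I would record two soft facts about a sequence $(x_n)_n\in\ell_p^w(X)$: it is bounded (apply the closed graph theorem to $x^*\mapsto(x^*(x_n))_n\in\ell_p$), and, since $\ell_p\subseteq c_0$, it is weakly null. I would also note that weak $p$-precompactness of $B_X$ forces $B_X$ to be a Rosenthal set (i.e.\ weakly $\infty$-precompact), so that the machinery of \cite{spd} is available in this setting.

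\emph{Equicompact $\Rightarrow$ uniformly $p$-convergent.} Let $(x_n^*)_n$ be a norm-null sequence in $X^*$ with $\|T(x)\|\le\sup_n|x_n^*(x)|$ for all $x\in X$ and $T\in M$, and let $(x_k)_k\in\ell_p^w(X)$ with $C:=\sup_k\|x_k\|$. Given $\varepsilon>0$, choose $N$ with $\|x_n^*\|<\varepsilon/(C+1)$ for $n>N$, and then, using that $(x_k)_k$ is weakly null, choose $k_0$ with $|x_n^*(x_k)|<\varepsilon$ for all $n\le N$ and $k\ge k_0$. For $k\ge k_0$ this gives $\sup_{T\in M}\|T(x_k)\|\le\sup_n|x_n^*(x_k)|\le\varepsilon$, as required; note this half uses nothing about $B_X$.

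\emph{Uniformly $p$-convergent $\Rightarrow$ equicompact.} Here I would invoke the sequential description of equicompactness recalled in Section~2: it suffices to show that every bounded sequence in $X$ has a subsequence on which $M$ converges uniformly. So let $(x_n)_n$ be bounded in $X$; after rescaling into $B_X$ and using that $B_X$ is weakly $p$-precompact, pass to a weakly $p$-Cauchy subsequence, still denoted $(x_n)_n$. I claim $(Tx_n)_n$ is uniformly Cauchy over $T\in M$; granting this, completeness of $Y$ upgrades it to a uniformly convergent sequence on $M$, so $M$ is equicompact. To prove the claim, argue by contradiction: if $(Tx_n)_n$ is not uniformly Cauchy, one extracts strictly interlacing increasing index sequences $(m_i)_i$ and $(n_i)_i$ and an $\varepsilon>0$ with $\sup_{T\in M}\|T(x_{m_i}-x_{n_i})\|>\varepsilon$ for all $i$; but $(x_{m_i}-x_{n_i})_i\in\ell_p^w(X)$ because $(x_n)_n$ is weakly $p$-Cauchy, so uniform $p$-convergence of $M$ forces $\sup_{T\in M}\|T(x_{m_i}-x_{n_i})\|\to0$, a contradiction.

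I expect the only delicate point to be the extraction of the interlacing index sequences together with the check that the definition of ``weakly $p$-Cauchy'' applies to them verbatim; this is precisely the bookkeeping that the proof of Proposition~2.2 of \cite{spd} carries out in the case $p=\infty$ (uniformly completely continuous sets and weakly null sequences). In the write-up I would therefore just cite that proposition, noting that its argument goes through unchanged once ``weakly null'' is replaced by ``$\in\ell_p^w(X)$'' and ``weakly Cauchy'' by ``weakly $p$-Cauchy'', which is why the statement is an immediate consequence of \cite{spd}. Nothing beyond boundedness of $M$ in $K(X,Y)$ and completeness of $Y$ enters the argument.
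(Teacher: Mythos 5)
Your proof is correct and follows essentially the route the paper intends: the paper offers no argument beyond declaring the proposition an immediate consequence of Proposition~2.2 of \cite{spd}, and your two implications supply exactly the $p$-adaptation of that result (equicompactness gives domination by a norm-null sequence in $X^{\ast}$, and weakly $p$-summable sequences are bounded and weakly null; conversely, weak $p$-precompactness of $B_{X}$, the interlacing-difference extraction, and the sequential characterization of equicompactness). I see no gaps in either direction.
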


\begin{ex}\label{e2}
For each $ b =(b_{n})_{n}\in \ell_{2}, $
 Define the operator $ T_{b} :\ell_{2}\rightarrow \ell_{1}$ by $ T_{b} (a_{n}):=(a_{n}b_{n}).$\ Obviously,
 $  M:=\lbrace  T_{b} :b\in B_{\ell_{2} }  \rbrace$ is  not equicompact in $ K(\ell_{2},\ell_{1}). $\ So, Proposition $ \rm{\ref{p2}} $ implies that $ M $ is not a uniformly $ p $-convergent
subset of $  K(\ell_{2}, \ell_{1}).$ 
\end{ex}
A subset $ M $ of $ K(X, Y )$ is said to be collectively
compact, if $ \bigcup_{T\in M} T(B_{X}) $ is a relatively compact set.\ Recall that $ M \subset K(X, Y ) $ is equicompact
 if and only if $ M^{\ast} = \lbrace T^{\ast} : T \in M\rbrace $ is collectively
compact; see \cite{spd}.
\begin{prop}\label{p3} 
Let $ S: X\rightarrow Z $ be a weakly $ p $-precompact operator.\ If for each Banach space $  Y,$ every 
$ N \subset C_{p}(Z, Y ) $ is uniformly $ p $-convergent, then the set 
 $ N\circ S:=\lbrace T\circ S: T \in N\rbrace $ is equicompact.
\end{prop}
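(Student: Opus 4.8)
The plan is to verify the sequential description of equicompactness recalled in Section~2: a bounded set $M\subset K(X,Y)$ is equicompact exactly when every bounded sequence in $X$ admits a subsequence $(x_{k_n})_n$ such that $(R\,x_{k_n})_n$ converges uniformly for $R\in M$. So I would start from a bounded sequence $(x_n)_n$ in $X$, say $\|x_n\|\le c$ for all $n$. Since $S$ is weakly $p$-precompact, $S(B_X)$ is weakly $p$-precompact, so $(S(c^{-1}x_n))_n$, and hence $(Sx_n)_n$, has a weakly $p$-Cauchy subsequence; denote it $(z_n)_n:=(S x_{k_n})_n$. The goal is to show that $((T\circ S)x_{k_n})_n=(Tz_n)_n$ converges uniformly on $N$.

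First I would record that $N\circ S\subset K(X,Y)$, so that the criterion above is applicable: each $T\in N\subset C_p(Z,Y)$ sends weakly $p$-summable sequences to norm-null ones, hence (by the routine contradiction argument, passing to increasing index sequences) sends weakly $p$-Cauchy sequences to norm-convergent ones; combined with the weak $p$-precompactness of $S$, this shows every bounded sequence in $X$ has a subsequence whose image under $T\circ S$ is norm convergent, so $T\circ S$ is compact. As $N$ is bounded, so is $N\circ S$.

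Next comes the main step: proving $\sup_{T\in N}\|Tz_n-Tz_m\|\to 0$ as $m,n\to\infty$. If this failed, there would be $\varepsilon>0$ and, after passing to a subsequence of the witnessing indices, strictly increasing sequences $(m_j)_j,(n_j)_j$ of positive integers with $\sup_{T\in N}\|Tz_{m_j}-Tz_{n_j}\|\ge\varepsilon$ for every $j$. But $(z_n)_n$ is weakly $p$-Cauchy, so $(z_{m_j}-z_{n_j})_j\in\ell_p^w(Z)$, and since $N$ is a uniformly $p$-convergent subset of $C_p(Z,Y)$ by hypothesis, $\sup_{T\in N}\|T(z_{m_j}-z_{n_j})\|\to 0$ as $j\to\infty$, which contradicts $\sup_{T\in N}\|Tz_{m_j}-Tz_{n_j}\|=\sup_{T\in N}\|T(z_{m_j}-z_{n_j})\|\ge\varepsilon$. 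Hence $(Tz_n)_n$ is uniformly Cauchy over $N$, and by completeness of $Y$ it is uniformly convergent on $N$. That is precisely the sequential characterization of equicompactness for $N\circ S$, so $N\circ S$ is equicompact.

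The argument is essentially formal, and I do not expect a genuine obstacle; the one point demanding care is the index extraction in the third paragraph, namely turning a failure of uniform Cauchyness into a single sequence of differences $(z_{m_j}-z_{n_j})_j$ that still lies in $\ell_p^w(Z)$. This is exactly where the definition of weak $p$-Cauchyness — its stability under taking differences along increasing index sequences — is used, and it is what lets the uniform $p$-convergence hypothesis on $N$ be brought to bear.
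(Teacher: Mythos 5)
Your proof is correct, but it takes a genuinely different route from the paper's. The paper dualizes: it uses the characterization recalled just before the statement, namely that a set of compact operators is equicompact iff the set of adjoints is collectively compact, then picks a sequence $\bigl((S^{\ast}\circ T_{n}^{\ast})y_{n}^{\ast}\bigr)_{n}$, checks that $A=\lbrace T_{n}^{\ast}y_{n}^{\ast}\rbrace$ is a uniformly $p$-convergent subset of $Z^{\ast}$, encodes $A$ into an operator $S_{1}\colon Z\to\ell_{\infty}$ (which is then $p$-convergent), and uses compactness of $S_{1}S$ to conclude that $S^{\ast}(A)$, and hence $S^{\ast}\circ N^{\ast}(B_{Y^{\ast}})$, is relatively compact. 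You instead verify the sequential characterization of equicompactness directly: weak $p$-precompactness of $S$ yields a weakly $p$-Cauchy subsequence of $(Sx_{n})_{n}$, and a failure of uniform Cauchyness over $N$ is converted, via increasing index sequences, into a weakly $p$-summable sequence of differences, contradicting the uniform $p$-convergence of $N$. Your route avoids adjoints and the auxiliary $\ell_{\infty}$-factorization entirely and is the more elementary argument; the paper's route instead exhibits a reusable device (uniformly $p$-convergent subsets of $Z^{\ast}$ give $p$-convergent operators into $\ell_{\infty}$). Both arguments rest on the same two ingredients: the hypothesis that $N$ itself is uniformly $p$-convergent, and the fact that a $p$-convergent operator composed with a weakly $p$-precompact one is compact (you prove this inline to get $N\circ S\subset K(X,Y)$; the paper uses it for $S_{1}S$). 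The one delicate point, keeping both witness index sequences strictly increasing so that the differences lie in $\ell_{p}^{w}(Z)$, is exactly the point you flag, and you handle it correctly.
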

\begin{proof}
 We prove that $ S^{\ast}\circ N^{\ast} $ is collectively compact.\ Consider  a sequence $( (S^{\ast}\circ T^{\ast}_{n})y^{\ast}_{n}) _{n}$
in $ \bigcup_{T\in N} S^{\ast}\circ T^{\ast}(B_{Y})$ and put $A:=\lbrace T^{\ast}_{n} y^{\ast}_{n}:n\in \mathbb{N}    \rbrace  .$\ It is easy to verify that, $ A $ is a uniformly $ p $-convergent set in $ Z^{\ast} .$\ Indeed, if $ (z_{n})_{n}\in \ell_{p}^{w} (Z),$ we have
$$\lim_{n\rightarrow \infty}\sup_{m} \vert \langle z_{n} , T^{\ast}_{m}(y^{\ast}_{m})     \rangle\vert \leq \lim_{n\rightarrow \infty}\sup_{m}\parallel T_{m} (z_{n}) \parallel=0.$$
Let $ (z^{\ast}_{n}) _{n}\subset A$ and let $ (z_{n})_{n}\in \ell^{w}_{p}(Z) .$\  Cosider an operator $ S_{1}:Z\rightarrow \ell_{\infty} $ defined by 
$ S_{1}(z) :=(z^{\ast}_{n}(z)).$\  
Since $ A $ is uniformly $ p $-convergent set in $ Z^{\ast} ,~ $ $ \lim_{n}\parallel S_{1}(z_{n})  \parallel =\lim_{n}\sup_{i}\vert z^{\ast}_{i} (z_{n}) \vert=0,$ and so $ S_{1} $ is $ p $-convergent.\ Hence, the operator $ S_{1} S:X \rightarrow \ell_{\infty}$ is compact, since 
$ S: X\rightarrow Z $ is a weakly $ p $-precompact operator.\ Thus $ S^{\ast}\circ S^{\ast}_{1} $ is compact and so, $ S^{\ast}(z^{\ast}_{n})_{n}=(S^{\ast}(S^{\ast}_{1}(e^{1}_{n}))_{n} $ is
relatively compact, where $ (e^{1}_{n}) $ is the unit basis of $ \ell_{1} .$\ Hence,
 $ S^{\ast}(A)$ is a relatively compact set and so,
$( (S^{\ast}\circ T^{\ast}_{n})y^{\ast}_{n}) _{n}$ has a convergent subsequence.\
\end{proof}

In \cite{spd1},
the authors defined weakly equicompact sets as those subsets $ M $ of $ W(X, Y ) $ satisfying that, for
every bounded sequence $ (x_{n})_{n} $ in $ X, $ there exists a subsequence $ (x_{k_{n}})_{n} $ such that $ (T(x_{k_{n}}))_{n} $ is
weakly uniformly convergent for $ T \in M. $\
\begin{prop}\label{p4} 
 If $ B_{X} $ is weakly $ p $-precompact, then  the following statements are equivalent
for a set $   M \subset W(X, Y ) .$\\
$ \rm{(i)} $
$ M $ is weakly equicompact.\\
$ \rm{(ii)} $ $ M^{\ast}(y^{\ast}) $ is a uniformly  $ p $-convergent set in $ X$ for every $ y^{\ast} \in Y^{\ast}.$\\
$ \rm{(iii)} $  $ {T(x_{n})\stackrel []{w}\rightarrow 0} $
 uniformly for $ T \in M  $ whenever $ (x_{n})_{n}\in \ell_{p}^{w} (X).$
\end{prop}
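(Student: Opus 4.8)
The plan is to prove the three-way equivalence by the cycle $\mathrm{(i)}\Rightarrow\mathrm{(iii)}\Rightarrow\mathrm{(ii)}\Rightarrow\mathrm{(i)}$, using the hypothesis that $B_X$ is weakly $p$-precompact at the two points where one needs to manufacture a ``uniform'' conclusion out of a ``pointwise'' or ``sequential'' one.

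For $\mathrm{(i)}\Rightarrow\mathrm{(iii)}$: let $(x_n)_n\in\ell_p^w(X)$. Since $B_X$ is weakly $p$-precompact and $(x_n)_n$ is in particular bounded, we may (after passing to a subsequence, which is harmless for the claimed limit once we observe the usual subsequence-extraction trick) assume $(x_n)_n$ is weakly $p$-Cauchy; combined with $(x_n)_n\in\ell_p^w(X)$ one checks $x_n\rightharpoonup 0$. Now apply weak equicompactness of $M$: there is a subsequence $(x_{k_n})_n$ with $(T(x_{k_n}))_n$ weakly uniformly convergent for $T\in M$; its weak limit must be $0$ for each $T$ (as $x_{k_n}\rightharpoonup 0$ and each $T$ is weakly continuous), so $T(x_{k_n})\stackrel{w}{\rightarrow}0$ uniformly on $M$. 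The standard argument that every subsequence of $(x_n)$ has a further subsequence with this property upgrades it to the full sequence. For $\mathrm{(iii)}\Rightarrow\mathrm{(ii)}$: fix $y^\ast\in Y^\ast$ and $(x_n)_n\in\ell_p^w(X)$; then
$$\sup_{T\in M}\bigl|\langle M^\ast(y^\ast)T,\ x_n\rangle\bigr| \;=\; \sup_{T\in M}\bigl|\langle y^\ast,\ T(x_n)\rangle\bigr| \;\longrightarrow\; 0,$$
using that $T(x_n)\to 0$ weakly uniformly in $T$ and that $y^\ast$ is a single fixed functional; hence $M^\ast(y^\ast)=\{T^\ast y^\ast: T\in M\}$ is uniformly $p$-convergent in $X^\ast$. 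Wait --- I should be careful about which space $M^\ast(y^\ast)$ lives in; reading Definition~\ref{d1} and the statement, $M^\ast(y^\ast)\subset X^\ast$ and ``uniformly $p$-convergent'' there means exactly $\lim_n\sup_{T\in M}|\langle T^\ast y^\ast, x_n\rangle|=0$ for all $(x_n)_n\in\ell_p^w(X)$, which is what the display gives.

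For $\mathrm{(ii)}\Rightarrow\mathrm{(i)}$: this is where $B_X$ weakly $p$-precompact does the real work. Let $(x_n)_n$ be a bounded sequence in $X$; extract a weakly $p$-Cauchy subsequence, still called $(x_n)_n$, so that $(x_{m}-x_{n})$-type differences lie in $\ell_p^w(X)$. For a fixed $y^\ast$, hypothesis $\mathrm{(ii)}$ applied to the weakly $p$-summable difference sequences gives that $(\langle y^\ast, T(x_n)\rangle)_n$ is Cauchy uniformly in $T\in M$; since this holds for every $y^\ast\in Y^\ast$, $(T(x_n))_n$ is weakly uniformly Cauchy for $T\in M$, and because $M\subset W(X,Y)$ (so each $T$ is weakly compact, and $B_X$ weakly $p$-precompact forces the relevant $T(x_n)$ to cluster weakly in $Y$) the uniform weak Cauchy-ness promotes to uniform weak convergence. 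That yields weak equicompactness of $M$.

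I expect the main obstacle to be the two ``pointwise/sequential $\Rightarrow$ uniform'' promotions --- turning uniform weak-Cauchyness into uniform weak-convergence in $\mathrm{(ii)}\Rightarrow\mathrm{(i)}$, and correctly handling the subsequence extractions so that a statement proved only along a subsequence of $(x_n)_n$ delivers the conclusion for the original sequence. The clean way around the latter is the elementary lemma: a sequence $(a_n)$ in a metric space converges to $a$ iff every subsequence has a further subsequence converging to $a$; applied to $a_n=\sup_{T\in M}\|T(x_n)\|$ (or the weak analogue) this lets each step freely pass to subsequences. For the former, the point is that a weakly compact operator image is weakly sequentially compact, so a uniformly-weak-Cauchy sequence $(T(x_n))_n$ that also has, for each $T$, a weak cluster point, must converge weakly, and the uniformity of the Cauchy condition transfers to uniformity of the limit; one should double-check that ``uniform in $T$'' genuinely survives this, which it does because the oscillation $\sup_{T}\sup_{i,j\ge N}\|T(x_i)-T(x_j)\|_w$ (tested against the ball of $Y^\ast$) is what $\mathrm{(ii)}$ controls.
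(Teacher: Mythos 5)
Your argument is correct, and its core coincides with the paper's: for the hard implication you extract a weakly $p$-Cauchy subsequence from a bounded sequence (this is where $B_X$ weakly $p$-precompact enters), and you turn failure of uniform weak Cauchyness into strictly increasing index sequences $(p_k),(q_k)$ so that $(x_{p_k}-x_{q_k})_k\in\ell_p^w(X)$ contradicts the hypothesis --- exactly the paper's (iii)$\Rightarrow$(i) contradiction (note that (ii) and (iii) are just two phrasings of the same condition, so running it from (ii) changes nothing). Where you differ: you close the cycle (i)$\Rightarrow$(iii)$\Rightarrow$(ii)$\Rightarrow$(i) and prove (i)$\Rightarrow$(iii) by hand via the ``every subsequence has a further good subsequence'' trick (weakly $p$-summable sequences are already weakly null, so the weak uniform limits are $0$; your extra weakly $p$-Cauchy extraction there is superfluous), whereas the paper gets (i)$\Rightarrow$(ii) by citing \cite[Corollary 2.3]{spd1} and calls (ii)$\Rightarrow$(iii) obvious. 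You are also more explicit than the paper about the final upgrade from ``weakly Cauchy uniformly on $M$'' to ``weakly convergent uniformly on $M$'': each $T$ is weakly compact, so $(T(x_n))_n$ has a weak limit $y_T$, and for each fixed $y^\ast$ one bounds $|\langle y^\ast,T(x_n)-y_T\rangle|=\lim_m|\langle y^\ast,T(x_n)-T(x_m)\rangle|$ by the uniform oscillation; the paper leaves this implicit. One small caution: your parenthetical about testing the oscillation ``against the ball of $Y^\ast$'' suggests a uniformity over $y^\ast\in B_{Y^\ast}$ that (ii) does not provide and that weak equicompactness does not require; the argument must be (and in your setup can be) run for each fixed functional $y^\ast$ separately, with the index threshold allowed to depend on $y^\ast$ but not on $T$.
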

\begin{proof} 
Since the assertions  (i) $ \Rightarrow $ (ii) is straightforward of {\rm (\cite[Corollary 2.3]{spd1})} and  (ii) $ \Rightarrow $ (iii)  is obvious, we only have to show that  (iii) $ \Rightarrow $ (i).\\ 
Let $ (x_{n})_{n} $ be a bounded sequence in $ X. $\ Since $ B_{X} $ is weakly $ p $-precompact, we can suppose that $ (x_{n})_{n} $ is weakly $ p $-Cauchy.\ Assume that $ M $ is not weakly equicompact.\ Thus, $ (T(x_{n})) _{n}$ is not weakly Cauchy uniformly for $ T \in M. $\ So, there exist $ \varepsilon >0,~ y^{\ast}\in Y^{\ast} ,$ 
strictly increasing sequences $ (p_{n})_{n} \subset \mathbb{N}$
and $ (q_{n}) _{n}\subset \mathbb{N},$ a sequence $ (T_{n})_{n} $ in $ M $ such that:
\begin{center}
$\vert  \langle  x_{p_{n}} -x_{q_{n}} , T_{n}^{\ast} (y^{\ast}) \rangle\vert=\vert  \langle  T_{n}(x_{p_{n}}) -T_{n}(x_{q_{n}}) ,  y^{\ast} \rangle\vert \geq \frac{\varepsilon}{2} ~~~~~~$ for  all $n\in \mathbb{N},$
\end{center}
which is a contradiction.
\end{proof}
In the following example, we show that the hypothesis
about $ X $ cannot be omitted in Proposition \ref{p4}.\
\begin{ex}\label{e3}
Suppose $  M:=B_{\ell_{\infty}}\otimes B_{\ell_{1}}.$\ An easy verification
shows that $M\subset W(\ell_{1},\ell_{1})$ 
satisfies conditions $\rm{ (ii) }$ and  $\rm{ (iii) }$  of   Proposition $ \rm{\ref{p4}} $ while, $ M $ is not weakly equicompact.\
\end{ex}
Here, we obtain a characterization of double dual of
 Banach space $ X $ with the $ p $-Schur property.\ 
\begin{thm}\label{t2}
If $ X $ is a Banach space, then the following statements are equivalent:\\
$ \rm{(i)} $ For every Banach space $ Y ,$ if $ M \subset W(X^{\ast}, Y )  $ is relatively weakly $ p $-compact, then
it  is weakly equicompact.\\
$ \rm{(ii)} $ For some Banach space $ Y\neq \lbrace 0\rbrace, $ if $ M \subset W(X^{\ast}, Y )  $ is relatively weakly $ p $-compact, then
it  is weakly equicompact.\\
$ \rm{(iii)} $ $ X^{\ast\ast} $
 has the $ p $-Schur property.
\end{thm}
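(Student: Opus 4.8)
The plan is to prove the three equivalences in a cycle: $\mathrm{(iii)} \Rightarrow \mathrm{(i)} \Rightarrow \mathrm{(ii)} \Rightarrow \mathrm{(iii)}$. The implication $\mathrm{(i)} \Rightarrow \mathrm{(ii)}$ is trivial (specialize to any nonzero $Y$, e.g. the scalar field), so the real content lies in the other two arrows, and the natural tool throughout is Proposition \ref{p4}, whose hypothesis ``$B_X$ is weakly $p$-precompact'' must be arranged in the right place.

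For $\mathrm{(iii)} \Rightarrow \mathrm{(i)}$: assume $X^{\ast\ast}$ has the $p$-Schur property. First I would observe that the $p$-Schur property of $X^{\ast\ast}$ means every weakly $p$-compact subset of $X^{\ast\ast}$ is norm compact, equivalently $id_{X^{\ast\ast}}$ is $p$-convergent, equivalently $B_{X^{\ast\ast}}$ is weakly $p$-precompact (indeed every weakly $p$-summable sequence in $X^{\ast\ast}$ is norm null). In particular $B_{X^\ast}$, sitting inside $X^{\ast\ast\ast}$ is not directly what we want — rather, the space whose unit ball we must control in order to apply Proposition \ref{p4} to operators on $X^\ast$ is $X^\ast$ itself, so I need $B_{X^\ast}$ to be weakly $p$-precompact. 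This follows because $X^{\ast\ast}$ having the $p$-Schur property forces $X^\ast$ to have it too (a subspace, or via the fact that $C_p$ passes to restrictions: if $id_{X^{\ast\ast}}$ is $p$-convergent then so is $id_{X^\ast}$ by considering the canonical embedding), hence $B_{X^\ast}$ is weakly $p$-precompact. Now given $M \subset W(X^\ast, Y)$ relatively weakly $p$-compact, to show $M$ is weakly equicompact I apply the implication $\mathrm{(iii)} \Rightarrow \mathrm{(i)}$ of Proposition \ref{p4} (with $X$ there replaced by $X^\ast$): I must check that $T(x_n) \stackrel{w}{\to} 0$ uniformly for $T \in M$ whenever $(x_n)_n \in \ell_p^w(X^\ast)$. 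But since $B_{X^\ast}$ is weakly $p$-precompact and $X^\ast$ has the $p$-Schur property, every $(x_n)_n \in \ell_p^w(X^\ast)$ is norm null; and since $M$ is bounded (being relatively weakly $p$-compact, hence bounded), $\sup_{T\in M}\|T(x_n)\| \le (\sup_{T\in M}\|T\|)\,\|x_n\| \to 0$, which gives even norm-uniform convergence, a fortiori weak. Hence $M$ is weakly equicompact.

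For $\mathrm{(ii)} \Rightarrow \mathrm{(iii)}$: fix the nonzero $Y$ from $\mathrm{(ii)}$ and pick $y_0 \in S_Y$, $y_0^\ast \in S_{Y^\ast}$ with $y_0^\ast(y_0) = 1$. To prove $X^{\ast\ast}$ has the $p$-Schur property, I take an arbitrary $(x_n^{\ast\ast})_n \in \ell_p^w(X^{\ast\ast})$ and must show $\|x_n^{\ast\ast}\| \to 0$. The trick is to manufacture, from this sequence, a relatively weakly $p$-compact subset $M$ of $W(X^\ast, Y)$ on which $\mathrm{(ii)}$ applies: set $T_n := x_n^{\ast\ast} \otimes y_0 \in L(X^\ast, Y)$, i.e. $T_n(x^\ast) = x_n^{\ast\ast}(x^\ast)\, y_0$. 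Each $T_n$ is a rank-one, hence weakly compact, operator, and $\|T_n\| = \|x_n^{\ast\ast}\|$. I claim $M := \{T_n : n\} \cup \{0\}$ is relatively weakly $p$-compact in $W(X^\ast, Y)$: the natural identification sends this set onto (a set isometric to) $\{x_n^{\ast\ast}\} \cup \{0\}$ in $X^{\ast\ast}$, which is relatively weakly $p$-compact because $(x_n^{\ast\ast})_n$, being weakly $p$-summable, is weakly $p$-convergent to $0$ (and so is every subsequence). Then $\mathrm{(ii)}$ gives that $M$ is weakly equicompact, so by Proposition \ref{p4} (again with $X^\ast$ in the role of $X$ — note we need $B_{X^\ast}$ weakly $p$-precompact here, which I would derive separately, or more carefully I would instead use the characterization of weak equicompactness directly without re-invoking \ref{p4}), applying the definition of weak equicompactness to the bounded sequence $(x^\ast)$ obtained from a weak$^\ast$-convergent net, or more simply: weak equicompactness of $M$ implies $M^\ast(y_0^\ast)$ is relatively weakly compact in $X^{\ast\ast}$, but $T_n^\ast(y_0^\ast) = x_n^{\ast\ast}$, so $(x_n^{\ast\ast})_n$ is relatively weakly compact; combined with it being weakly $p$-summable this forces norm convergence to $0$. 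Hence $id_{X^{\ast\ast}}$ is $p$-convergent, i.e. $X^{\ast\ast}$ has the $p$-Schur property.

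The main obstacle I anticipate is the bookkeeping around \emph{which} unit ball needs to be weakly $p$-precompact in order to legitimately invoke Proposition \ref{p4}: that proposition is stated for operators on a space $X$ with $B_X$ weakly $p$-precompact, and here we are applying it to operators on $X^\ast$, so one must verify that the $p$-Schur property of $X^{\ast\ast}$ (available in $\mathrm{(iii)} \Rightarrow \mathrm{(i)}$, and which must be circularly re-established or sidestepped in $\mathrm{(ii)} \Rightarrow \mathrm{(iii)}$) propagates to make $B_{X^\ast}$ weakly $p$-precompact. In the $\mathrm{(ii)} \Rightarrow \mathrm{(iii)}$ direction this looks delicate because we do not yet know $\mathrm{(iii)}$; the cleanest fix is to avoid Proposition \ref{p4} there and argue directly from the definition of weak equicompactness via the adjoint/collective-compactness reformulation, extracting relative weak compactness of $\{x_n^{\ast\ast}\}$ in $X^{\ast\ast}$ and then using that a relatively weakly compact, weakly $p$-summable sequence is norm null. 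A secondary routine point is checking that the tensor construction $M \mapsto \{x_n^{\ast\ast}\}$ really is an isometric order-preserving-enough correspondence so that ``relatively weakly $p$-compact'' transfers — this is the same kind of isometry argument used in Example \ref{e1} and Theorem \ref{t1}, so I would cite it briefly rather than redo it.
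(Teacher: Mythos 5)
Your implication $\rm{(ii)} \Rightarrow \rm{(iii)}$ is essentially the paper's argument (tensor with $y_{0}$, transfer relative weak $p$-compactness, invoke weak equicompactness), but the last step is wrong as written: from weak equicompactness you extract only that $M^{\ast}(y_{0}^{\ast})=\lbrace x_{n}^{\ast\ast}\rbrace$ is relatively \emph{weakly} compact, and ``relatively weakly compact $+$ weakly $p$-summable $\Rightarrow$ norm null'' is false (the unit vector basis of $\ell_{2}$ is weakly $p$-summable for $p\geq 2$ and relatively weakly compact, but not norm null). What weak equicompactness actually gives, and what the paper uses, is relative \emph{norm} compactness: either via Proposition 2.2 of \cite{spd1} (as in Proposition \ref{p5}), or by viewing $\lbrace x_{n}^{\ast\ast}\rbrace$ as a weakly equicompact subset of $W(X^{\ast},\mathbb{R})\cong X^{\ast\ast}$ and applying Lemma 2.1 of \cite{spd1}, which is exactly the paper's route; with norm compactness plus weak nullity the conclusion does follow, so this direction is repairable.

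The implication $\rm{(iii)} \Rightarrow \rm{(i)}$ has a structural gap. You repeatedly identify the $p$-Schur property with weak $p$-precompactness of the unit ball, and these are very different: $\ell_{1}$ has the $p$-Schur property for every $p$ while $B_{\ell_{1}}$ has no weakly Cauchy (a fortiori no weakly $p$-Cauchy) subsequence of its unit vector basis; likewise $\ell_{2}$ has the $1$-Schur property but $B_{\ell_{2}}$ is not weakly $1$-precompact. (Also, $X^{\ast}$ is not a subspace of $X^{\ast\ast}$ --- the canonical embeddings are $X\subset X^{\ast\ast}$ and $X^{\ast}\subset X^{\ast\ast\ast}$ --- so the claimed inheritance of the $p$-Schur property by $X^{\ast}$ is not justified as stated.) Hence the hypothesis of Proposition \ref{p4} (with $X^{\ast}$ in place of $X$) is simply unavailable, and your remaining argument uses only boundedness of $M$, never its relative weak $p$-compactness. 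The statement it would then prove --- every \emph{bounded} $M\subset W(X^{\ast},Y)$ is weakly equicompact when $X^{\ast\ast}$ is $p$-Schur --- is false: take $p=1$, $X=\ell_{2}$ (so $X^{\ast\ast}=\ell_{2}$ has the $1$-Schur property), $Y=\mathbb{R}$, $M=B_{\ell_{2}}\subset W(\ell_{2},\mathbb{R})$; then $\sup_{T\in M}\Vert T(x_{n}^{\ast})\Vert=\Vert x_{n}^{\ast}\Vert\rightarrow 0$ for every weakly $1$-summable $(x_{n}^{\ast})$, yet $M$ is not weakly equicompact, since that would force every bounded sequence in $\ell_{2}$ to have a norm convergent subsequence. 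The paper avoids Proposition \ref{p4} here: assuming the uniformity fails, it produces $(T_{n})\subset M$, uses the relative weak $p$-compactness of $M$ to get a weakly convergent subsequence of $(T_{n}^{\ast}(y^{\ast}))$, and then applies the $(DPP_{p})$ of $X^{\ast\ast}$ (Theorem 2.8 of \cite{afz} together with Theorem 3.1 of \cite{ccl}) to the pairing $\langle x_{p_{n}}^{\ast}-x_{q_{n}}^{\ast},T_{n}^{\ast}(y^{\ast})\rangle$; this essential use of the weak $p$-compactness of $M$ is what your argument is missing and cannot be patched by the $p$-Schur-to-precompactness shortcut.
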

\begin{proof} 
The assertion  (i) $ \Rightarrow $ (ii) is  straightforward.\ Therefore,  we only  prove that the assertions  (ii) $ \Rightarrow $ (iii) and  (iii) $ \Rightarrow $ (i).\\
 (ii) $ \Rightarrow $ (iii) Let  $ K $ be a relatively weakly $ p $-compact set
 in $  X^{\ast\ast}. $\ We claim that $ K $ is relatively  norm compact.\ For this purpose, consider
  $ M=K\bigotimes y_{0},$ so that  $y_{0} \in Y- \lbrace 0\rbrace. $\ One can see that, $ M  $ is a relatively weakly $ p $-compact set in $ W(X^{\ast},Y). $\ By the hypothesis, $ M $ is weakly equicompact, which yields the
weakly equicompactness of $ K .$\ Hence, {\rm (\cite[Lemma 2.1]{spd1})} implies that $ K $ is relatively  norm compact.\\
  (iii) $ \Rightarrow $ (i) Suppose  that $ M \subset W(X^{\ast}, Y ) $ is relatively weakly $ p $-compact and $ (x^{\ast}_{n}) _{n}\in \ell_{p}^{w}(X^{\ast}).$\ If  $ {T(x^{\ast}_{n})\stackrel []{w}\rightarrow 0} ,$
so that the convergence is not
uniform for $ T \in M,$ then there exist $ y^{\ast} \in Y^{\ast},~ \varepsilon>0,$
 strictly increasing sequences $ (p_{n}) _{n}\subset \mathbb{N}$
and $ (q_{n})_{n}\subset \mathbb{N} ,$ and a sequence $ (T_{n}) _{n}$ in $  M$ such that:
\begin{center}
$\vert  \langle  x^{\ast}_{p_{n}} -x^{\ast}_{q_{n}} , T_{n}^{\ast}( y^{\ast}) \rangle\vert=\vert  \langle  T_{n}(x^{\ast}_{p_{n}}) -T_{n}(x^{\ast}_{q_{n}}) ,  y^{\ast} \rangle\vert \geq \frac{\varepsilon}{2} ~~~~~~$ for  all $n\in \mathbb{N},$
\end{center}
On the other hand, $ (x^{\ast}_{p_{n}}-x^{\ast}_{q_{n}})_{n} $ is weakly $ p $-Cauchy
and $ (T^{\ast}_{n} (y^{\ast})) _{n}$
 admits a weakly convergent subsequence.\ Since $  X^{\ast\ast}$
 has the $ p $-Schur property, Theorem 2.8 of \cite{afz} implies that 
 $  X^{\ast\ast}\in (DPP_{p}).$\ Hence, by using Theorem 3.1 in \cite{ccl}, we have $ \displaystyle\lim_{n\rightarrow \infty} \vert  \langle  x^{\ast}_{p_{n}} -x^{\ast}_{q_{n}} , T_{n}^{\ast} (y^{\ast}) \rangle\vert=0,$ which is a contradiction.
\end{proof}

\begin{prop}\label{p5}
Let $ X $ be a Banach space.\ If there exsits a Banach space $  Y\neq \lbrace 0\rbrace$ such that every uniformly $ p $-convergent
 set of $  W(X, Y )  $ is weakly equicompact, then $ C_{p}(X, Y ) =K(X,Y). $
\end{prop}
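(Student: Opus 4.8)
The plan is to mimic the proof of Theorem \ref{t1}, replacing $K(X,Y)$ by $W(X,Y)$ and equicompactness by weak equicompactness. Since the $p$-$(V)$ subsets of $X^{\ast}$ coincide with the uniformly $p$-convergent subsets of $X^{\ast}$, by \cite[Theorem 2.4]{afz} it is enough to prove that every uniformly $p$-convergent subset $M$ of $X^{\ast}$ is relatively norm compact. Note that, unlike in Propositions \ref{p2}--\ref{p4}, no hypothesis on $B_{X}$ should be needed here, because the assumed weak equicompactness will be applied directly to a rank-one family built from $M$.

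First I would fix $y_{0}\in S_{Y}$ (possible since $Y\neq\lbrace 0\rbrace$) and pick $y_{0}^{\ast}\in S_{Y^{\ast}}$ with $y_{0}^{\ast}(y_{0})=1$, and set $H:=M\otimes y_{0}=\lbrace x^{\ast}\otimes y_{0}: x^{\ast}\in M\rbrace$. Each $x^{\ast}\otimes y_{0}$ is rank-one, hence compact, so $H\subseteq K(X,Y)\subseteq W(X,Y)$. Next one checks that $H$ is a uniformly $p$-convergent subset of $W(X,Y)$: for every $(x_{n})_{n}\in\ell_{p}^{w}(X)$ we have $\sup_{x^{\ast}\in M}\Vert(x^{\ast}\otimes y_{0})(x_{n})\Vert=\Vert y_{0}\Vert\,\sup_{x^{\ast}\in M}\vert x^{\ast}(x_{n})\vert\to 0$, since $M$ is uniformly $p$-convergent (i.e.\ a $p$-$(V)$ set) in $X^{\ast}$. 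Therefore, by the hypothesis, $H$ is weakly equicompact.

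The heart of the argument is then to push weak equicompactness of $H$ down to $M$. Given a bounded sequence $(x_{n})_{n}$ in $X$, weak equicompactness of $H$ provides a subsequence $(x_{k_{n}})_{n}$ such that $\big((x^{\ast}\otimes y_{0})(x_{k_{n}})\big)_{n}=\big(x^{\ast}(x_{k_{n}})\,y_{0}\big)_{n}$ is weakly convergent, uniformly for $x^{\ast}\in M$. Evaluating against $y_{0}^{\ast}$ yields that $\big(x^{\ast}(x_{k_{n}})\big)_{n}=\big(y_{0}^{\ast}(x^{\ast}(x_{k_{n}})\,y_{0})\big)_{n}$ is convergent, uniformly for $x^{\ast}\in M$; that is, $M$ is weakly equicompact as a subset of $K(X,\mathbb{R})$. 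An application of \cite[Lemma 2.1]{spd1} then gives that $M$ is relatively norm compact, and \cite[Theorem 2.4]{afz} finally yields $C_{p}(X,Y)=K(X,Y)$.

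I expect the only delicate points to be bookkeeping rather than conceptual: making sure that the reduction \cite[Theorem 2.4]{afz} (relative norm compactness of all $p$-$(V)$ sets of $X^{\ast}$ being equivalent to $C_{p}(X,Z)=K(X,Z)$ for every $Z$) and the compactness criterion \cite[Lemma 2.1]{spd1} apply verbatim to subsets of $X^{\ast}\cong K(X,\mathbb{R})$, and that the definition of weak equicompactness one is using is the ``uniformly weakly convergent subsequence'' one, so that the $y_{0}^{\ast}$-evaluation legitimately produces an ordinary uniform limit. Everything else is the same ``$\otimes\, y_{0}$'' trick already carried out in the proof of Theorem \ref{t1}.
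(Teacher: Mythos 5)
Your argument is correct and is essentially the paper's own proof: both reduce, via \cite[Theorem 2.4]{afz}, to showing that every uniformly $p$-convergent subset of $X^{\ast}$ is relatively norm compact, and both use the same rank-one trick $K\otimes y_{0}\subset W(X,Y)$ together with a norming functional $y_{0}^{\ast}$ to exploit the weak equicompactness hypothesis. The only (harmless) difference is the final step: the paper cites \cite[Proposition 2.2]{spd1} to conclude directly that $K=M^{\ast}(y_{0}^{\ast})$ is relatively compact, whereas you obtain the same conclusion by evaluating the uniformly weakly convergent subsequences against $y_{0}^{\ast}$ to see that $K$ is weakly equicompact in $K(X,\mathbb{R})$ and then invoking \cite[Lemma 2.1]{spd1}, exactly as in the proofs of Theorem \ref{t1} and Theorem \ref{t2}.
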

\begin{proof}
It suffices to show that every
uniformly $ p $-convergent set $ K\subset X^{\ast} $
is relatively compact; see {\rm (\cite[Theorem 2.4]{afz})}.\ Choose $ y_{0} \in Y$
 and $ y^{\ast}_{0}\in Y^{\ast} $ such that $ \langle y^{\ast}_{0} , y_{0} \rangle =1.$\ Clearly, $ M=K\bigotimes y_{0} $ is a
uniformly $ p $-convegent set in $ W(X,Y) $  and so, by the hypothesis, $ M $ is weakly equicompact.\ Hence, by using
Proposition 2.2 of \cite{spd1},
$ K =\langle y^{\ast}_{0} , y_{0} \rangle K =M^{\ast}(y^{\ast}_{0})$ is relatively compact.
\end{proof}

\begin{defn} \label{d2} Let $ U\subset X$ be an open convex and $ 1 \leq p \leq \infty. $\
We say that $ f: U\rightarrow Y $ is a weakly $ p $-sequentially continuous map, if it takes $ U $-bounded and weakly $ p $-Cauchy  sequences of $ U $ into norm convergent
sequences in $ Y. $\ We denote the space of all
 such mappings by $ C_{wsc}^{p}(U, Y ). $
\end{defn}
 The class of all weakly $ \infty $-sequentially continuous mappings is precisely the class of all weakly sequentially continuous mappings; see \cite{ce2}.\ Also,
note that
 $ C^{q}_{wsc} (U,Y)\subseteq C^{p}_{wsc} (U,Y), $ whenever $ 1\leq p<q \leq \infty. $\ But, we do not have any example of a mapping $ f \in C^{1u}(U, Y ) \cap C^{p}_{wsc}(U, Y ) $ which does not belong to $ C^{q}_{wsc}(U, Y ). $\

\begin{prop}\label{p6} Let $ U\subset X$ be an open convex and $ 1 \leq p \leq \infty. $\
If $ f $ is compact and takes $ U $-bounded and weakly $ p $-Cauchy sequences into weakly Cauchy sequences, then $ f \in C_{wsc}^{p}(U, Y ). $ 
\end{prop}
\begin{prop}\label{p7} Let $ U\subset X$ be an open convex and $ 1 \leq p \leq \infty. $\
If $ f \in C^{1u}(U, Y )$ so that  $ f^{\prime} \in C^{p}_{wsc} (U, C_{p}(X,Y)),$ then $ f\in C_{wsc}^{p} (U,Y).$
\end{prop}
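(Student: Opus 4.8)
The plan is to combine the mean value inequality for vector-valued maps with the two hypotheses, arguing by contradiction on the increments of $f$. First I would fix a $U$-bounded weakly $p$-Cauchy sequence $(x_{n})_{n}$ in $U$ and put $B:=\overline{\mathrm{co}}\{x_{n}:n\in\mathbb{N}\}$. Since $U$ is open and convex, $B$ is again $U$-bounded, each segment $I(x_{m},x_{n})$ lies in $B\subseteq U$, and $f'$ is uniformly continuous on $B$ because $f\in C^{1u}(U,Y)$. As $Y$ is complete, it suffices to show that $(f(x_{n}))_{n}$ is norm Cauchy, and by the mean value inequality $\| f(x_{n})-f(x_{m})\|\le\sup_{0\le t\le 1}\| f'((1-t)x_{m}+tx_{n})(x_{n}-x_{m})\|$ this will follow once we prove
\[
\lim_{m,n\to\infty}\ \sup_{0\le t\le 1}\ \| f'((1-t)x_{m}+tx_{n})(x_{n}-x_{m})\| = 0 .
\]

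I would establish this limit by contradiction. If it failed, there would be $\varepsilon>0$, strictly increasing sequences $(m_{k})_{k},(n_{k})_{k}$ in $\mathbb{N}$, and numbers $t_{k}\in[0,1]$ with $\| f'((1-t_{k})x_{m_{k}}+t_{k}x_{n_{k}})(x_{n_{k}}-x_{m_{k}})\|\ge\varepsilon$ for all $k$; passing to a subsequence I may assume $t_{k}\to t_{0}\in[0,1]$. Put $z_{k}:=(1-t_{k})x_{m_{k}}+t_{k}x_{n_{k}}$ and $w_{k}:=(1-t_{0})x_{m_{k}}+t_{0}x_{n_{k}}$; both lie in $B$, and $\|z_{k}-w_{k}\|\le 2|t_{k}-t_{0}|\sup_{n}\|x_{n}\|\to 0$, so uniform continuity of $f'$ on $B$ gives $\|f'(z_{k})-f'(w_{k})\|\to 0$. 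Since $(x_{n_{k}}-x_{m_{k}})_{k}$ is bounded, this forces $\| f'(w_{k})(x_{n_{k}}-x_{m_{k}})\|\ge\varepsilon/2$ for all large $k$. Replacing the moving evaluation point $z_{k}$ by the fixed convex combination $w_{k}$ is the one place where the $C^{1u}$ assumption (rather than mere differentiability) is genuinely used, and I expect this to be the main technical point.

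To finish I would use that $(w_{k})_{k}$ is $U$-bounded (it lies in $B$) and weakly $p$-Cauchy: for strictly increasing $(a_{j})_{j},(b_{j})_{j}$ one has $w_{a_{j}}-w_{b_{j}}=(1-t_{0})(x_{m_{a_{j}}}-x_{m_{b_{j}}})+t_{0}(x_{n_{a_{j}}}-x_{n_{b_{j}}})$, which lies in $\ell_{p}^{w}(X)$ because $(x_{n})_{n}$ is weakly $p$-Cauchy and $\ell_{p}^{w}(X)$ is a linear subspace (the cases $t_{0}\in\{0,1\}$ are trivial, $(w_{k})_{k}$ then being a subsequence of $(x_{n})_{n}$). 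Hence the hypothesis $f'\in C_{wsc}^{p}(U,C_{p}(X,Y))$ yields an operator $S\in C_{p}(X,Y)$ with $\|f'(w_{k})-S\|\to 0$. Since $(x_{n_{k}}-x_{m_{k}})_{k}\in\ell_{p}^{w}(X)$ and $S$ is $p$-convergent, $\|S(x_{n_{k}}-x_{m_{k}})\|\to 0$; combining this with $\|f'(w_{k})-S\|\to 0$ and the boundedness of $(x_{n_{k}}-x_{m_{k}})_{k}$ gives $\| f'(w_{k})(x_{n_{k}}-x_{m_{k}})\|\to 0$, contradicting the bound $\ge\varepsilon/2$ above. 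Thus the displayed limit holds, $(f(x_{n}))_{n}$ converges in norm, and $f\in C_{wsc}^{p}(U,Y)$.
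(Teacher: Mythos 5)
Your argument is correct, and at its core it is the same proof as the paper's: apply the mean value inequality, feed the intermediate points into the hypothesis $f'\in C^{p}_{wsc}(U,C_{p}(X,Y))$ to get a norm limit $S\in C_{p}(X,Y)$, and split $f'(\cdot)(x_{n}-x_{m})$ into $(f'(\cdot)-S)(x_{n}-x_{m})$ plus $S(x_{n}-x_{m})$, killing the second term by $p$-convergence of $S$. Where you genuinely differ is in how the intermediate points are handled: the paper simply asserts that the mean-value points $c_{n,m}\in I(x_{n},x_{m})$ form a $U$-bounded weakly $p$-Cauchy sequence and applies the hypothesis to them directly, a claim that is not justified for arbitrary points $c_{n,m}=(1-t_{n,m})x_{m}+t_{n,m}x_{n}$ with uncontrolled $t_{n,m}$ (and the paper's written argument never actually uses the $C^{1u}$ assumption). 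Your contradiction argument repairs exactly this: extracting $t_{k}\to t_{0}$ and using uniform continuity of $f'$ on the $U$-bounded set $B$ to replace the moving points $z_{k}$ by the fixed convex combinations $w_{k}=(1-t_{0})x_{m_{k}}+t_{0}x_{n_{k}}$, which are legitimately weakly $p$-Cauchy since $\ell_{p}^{w}(X)$ is a linear space. So your route is slightly longer but more rigorous, and it makes visible why the hypothesis $f\in C^{1u}(U,Y)$ is needed. Two cosmetic points: when negating the double limit you should pick $t_{k}$ with value $\geq \varepsilon/2$ (or $>\varepsilon-1/k$) rather than $\geq\varepsilon$, since only the supremum is bounded below by $\varepsilon$; and the $U$-boundedness of the closed convex hull $B$ deserves the one-line remark that the distance to $\partial U$ is concave on the convex set $U$ and continuous, a fact the paper itself uses implicitly elsewhere.
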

\begin{proof}
Let $ (x_{n})_{n} $ be a $ U $-bounded and weakly $ p $-Cauchy sequence.\ By the Mean Value Theorem {\rm (\cite[Theorem 6.4]{ch})}, we have
$$ \parallel f(x_{n})-f(x_{m}) \parallel ~\leq ~\parallel f^{\prime}(c_{n,m}) (x_{n}-x_{m})\parallel$$
for some $ c_{n,m} \in I(x_{n}, x_{m}). $\ Since the sequence $ (c_{n,m}) $ is $ U$-bounded and weakly $ p $-Cauchy, the sequence $ (f^{\prime}(c_{n,m})) $ norm
converges
to some $ T \in C_{p}(X, Y ). $\ Therefore we have:\\ $$ \displaystyle\lim_{n,m\rightarrow \infty} \parallel f^{\prime}(c_{n,m}) (x_{n}-x_{m})-T(x_{n}-x_{m})\parallel+T(x_{n}-x_{m}) \parallel $$\\ $$ \leq \lim_{n,m\rightarrow \infty} \parallel f^{\prime}(c_{n,m}) (x_{n}-x_{m})-T(x_{n}-x_{m})\parallel+\lim_{n,m\rightarrow\infty}\parallel T(x_{n}-x_{m}) \parallel=0.$$
So,
$ \displaystyle\lim_{n,m\rightarrow \infty}\parallel f^{\prime}(c_{n,m}) (x_{n}-x_{m})\parallel=0.$
Hence, the sequence $ (f(x_{n}))_{n} $ is norm convergent.
\end{proof}
\begin{prop}\label{p8}  Let $ U\subset X$ be an open convex and $ 1 \leq p \leq \infty. $\
If $ f:U\rightarrow Y $ is a differentiable mapping such that for every $  U$-bounded  set
$ K, $ $ f^{\prime}(K) $ is a  uniformly $ p $-convergent set in $ L(X, Y ),$\ then $ f\in C_{wsc}^{p} (U,Y). $
\end{prop}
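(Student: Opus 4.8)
The plan is to follow the argument of Proposition~\ref{p7}, but to feed the hypothesis on $f'$ directly into Definition~\ref{d1} instead of routing it through $C^p_{wsc}(U,C_p(X,Y))$. Let $(x_n)_n$ be a $U$-bounded, weakly $p$-Cauchy sequence in $U$; since a norm Cauchy sequence converges, it suffices to show that $(f(x_n))_n$ is norm Cauchy. First I would gather the relevant points of $U$ into a single set: put $K:=\bigcup_{n,m\in\mathbb{N}}I(x_n,x_m)$. Writing $\delta:=d(\{x_n:n\in\mathbb{N}\},\partial U)>0$, openness of $U$ gives $B(x_n,\delta)\subseteq U$ for each $n$, and then convexity of $U$ gives $B(c,\delta)=\lambda B(x_n,\delta)+(1-\lambda)B(x_m,\delta)\subseteq U$ for every $c=\lambda x_n+(1-\lambda)x_m\in I(x_n,x_m)$; hence $K$ is bounded with $d(K,\partial U)\ge\delta>0$, i.e.\ $K$ is $U$-bounded. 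By hypothesis, $f'(K)$ is then a uniformly $p$-convergent subset of $L(X,Y)$.

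Next I would invoke the Mean Value Theorem (\cite[Theorem 6.4]{ch}), exactly as in Proposition~\ref{p7}: for each pair $n,m$ pick $c_{n,m}\in I(x_n,x_m)\subseteq K$ with $\|f(x_n)-f(x_m)\|\le\|f'(c_{n,m})(x_n-x_m)\|$, so the proof reduces to showing $\lim_{n,m\to\infty}\|f'(c_{n,m})(x_n-x_m)\|=0$. I would argue by contradiction. If this failed, there would be $\varepsilon>0$ and strictly increasing sequences $(n_k)_k,(m_k)_k$ in $\mathbb{N}$ — built by repeatedly selecting a new pair of indices strictly larger than all indices chosen so far — with $\|f'(c_{n_k,m_k})(x_{n_k}-x_{m_k})\|\ge\varepsilon$ for all $k$. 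Since $(x_n)_n$ is weakly $p$-Cauchy and $(n_k)_k,(m_k)_k$ are increasing, $(x_{n_k}-x_{m_k})_k=-(x_{m_k}-x_{n_k})_k\in\ell_p^w(X)$; and since $\{c_{n_k,m_k}:k\in\mathbb{N}\}\subseteq K$ with $f'(K)$ uniformly $p$-convergent, Definition~\ref{d1} gives
$$\varepsilon\le\|f'(c_{n_k,m_k})(x_{n_k}-x_{m_k})\|\le\sup_{T\in f'(K)}\|T(x_{n_k}-x_{m_k})\|\longrightarrow 0\qquad(k\to\infty),$$
a contradiction. Hence $\|f(x_n)-f(x_m)\|\to0$, so $(f(x_n))_n$ is norm Cauchy and thus norm convergent; that is, $f\in C^p_{wsc}(U,Y)$.

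I do not expect a genuine obstacle here: the two steps that call for a little care are verifying that the union of segments $K$ stays $U$-bounded (the short convexity computation above) and extracting the two increasing index sequences from the negation of the double limit. For $p=\infty$ this specializes to the known statement that a differentiable map whose derivative carries bounded sets to uniformly completely continuous sets is weakly sequentially continuous, so the result is the anticipated $p$-analogue and its proof is a routine adaptation of that of Proposition~\ref{p7}.
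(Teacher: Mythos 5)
Your proof is correct and takes essentially the same route as the paper's: apply the Mean Value Theorem, note that the set of intermediate points $c_{n,m}$ is a $U$-bounded subset of $U$ so that the hypothesis makes $f'$ of it uniformly $p$-convergent, and then use that differences of a weakly $p$-Cauchy sequence along increasing index sequences are weakly $p$-summable. Your extra care (the convexity check that the union of segments stays $U$-bounded and the explicit extraction of increasing index sequences to interpret the double limit) merely makes rigorous what the paper states tersely, so there is no substantive difference.
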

\begin{proof} 
Let $ (x_{n})_{n} $ be a $ U $-bounded and weakly $ p $-Cauchy sequence.\ By the Mean Value Theorem {\rm (\cite[Theorem 6.4]{ch})}, for all $ n,m\in \mathbb{N} ,$ there is
$ c_{i,j} \in I(x_{n},x_{m})$ such that
$$ \parallel f(x_{n}) -f(x_{m}) \parallel \leq \parallel f^{\prime}(c_{i,j})(x_{n}-x_{m}) \parallel \leq \sup_{i,j} \parallel f^{\prime}(c_{i,j})(x_{n}-x_{m}) \parallel$$
Obviously, the set $ K:=\lbrace c_{i,j}:i,j \in \mathbb{N} \rbrace $ is contained in the convex hull of all $ x_{n} $ and then in
$ U, $ since $ U $ is a convex set.\ Moreover $ K$ is still a  $ U $-bounded set.\
By the hypothesis, $ f^{\prime}(K) $ is a uniformly $ p $-convergent set in $ L(X,Y) $.\ Since $ (x_{n}-x_{m})\in \ell_{p} ^{w}(X),$ it follows that
$ \displaystyle\lim_{_{n,m}}\sup_{i,j}\parallel f^{\prime}(c_{i,j})(x_{n}-x_{m}) \parallel =0. $
Therefore, $ \Vert f(x_{n})-f(x_{m}) \Vert\rightarrow 0. $
\end{proof}
Now by using the same argument of {\rm (\cite[Theorem  2.1]{ce2})},  we  find a method 
to get    uniformly $ p $-convergent subsets of $ L(X, Y ). $
\begin{thm}\label{t3}
Let $ U\subseteq X $ be an open convex subset and $ 1\leq p< \infty. $\ If
$ f \in C^{1u}(U, Y ),$ then the following assertions are equivalent:\\
$ \rm{(i)} $ $ f\in C_{wsc}^{p} (U,Y);$\\
$ \rm{(ii)} $ For every $ U $-bounded and weakly $ p $-Cauchy sequence $ (x_{n}) $ and every weakly $ p $-Cauchy sequence $ (h_{n}) \subset X, $ the sequence $ (f^{\prime}(x_{n})(h_{n}))_{n} $ norm converges
in $ Y; $\\
$ \rm{(iii)} $ For every $ U $-bounded weakly $ p $-Cauchy sequence $ (x_{n})_{n} $ and every weakly $ p $-summable sequence $ (h_{n})_{n} \subset X, $ we have
$$\lim_{n}\sup_{m}\parallel f^{\prime}(x_{m}) (h_{n}) \parallel =0;$$\
$ \rm{(iv)} $ For every $ U$-bounded and weakly $ p $-Cauchy sequence $ (x_{n})_{n} $ and every weakly $ p $-summable sequence $ (h_{n})_{n} \subset X, $ we have
$$\lim_{n} f^{\prime}(x_{n}) (h_{n}) =0;$$
$ \rm{(v)} $ $ f^{\prime} $ takes $ U$-bounded and weakly $ p $-precompact subsets of $ U$ into uniformly $ p $-convergent subsets of $ L(X, Y ). $
\end{thm}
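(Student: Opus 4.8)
The plan is to establish the equivalence by running the cycle $(\mathrm{i})\Rightarrow(\mathrm{ii})\Rightarrow(\mathrm{iv})\Rightarrow(\mathrm{v})\Rightarrow(\mathrm{i})$ together with $(\mathrm{iii})\Leftrightarrow(\mathrm{iv})$, imitating the scheme of \cite[Theorem 2.1]{ce2} but feeding in weakly $p$-Cauchy sequences and members of $\ell_p^w(X)$ where that proof used weakly Cauchy sequences and Rosenthal sets. Throughout I would use, without further comment, the following elementary stability facts: a fixed linear combination of weakly $p$-Cauchy sequences is weakly $p$-Cauchy; if $(x_n)_n$ is weakly $p$-Cauchy and $(n_k)_k,(m_k)_k$ increase then $(x_{n_k}-x_{m_k})_k\in\ell_p^w(X)$; $\ell_p^w(X)$ is stable under subsequences and under multiplication by a bounded scalar sequence; every weakly $p$-summable sequence is weakly $p$-Cauchy and (since $1\le p<\infty$) weakly null; and the range of a weakly $p$-Cauchy sequence is a weakly $p$-precompact set.

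For $(\mathrm{i})\Rightarrow(\mathrm{ii})$, let $(x_n)_n$ be $U$-bounded and weakly $p$-Cauchy and $(h_n)_n$ weakly $p$-Cauchy; both are bounded. For $t>0$ small (depending only on $\sup_n\|h_n\|$ and the $U$-bound of $(x_n)_n$) the sequence $(x_n+th_n)_n$ is again $U$-bounded and weakly $p$-Cauchy, so by $(\mathrm{i})$ the sequences $(f(x_n+th_n))_n$ and $(f(x_n))_n$ are norm convergent, whence $\tfrac1t\bigl(f(x_n+th_n)-f(x_n)\bigr)$ converges in $Y$ as $n\to\infty$ for each such $t$. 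Writing $f(x_n+th_n)-f(x_n)=\int_0^1 f'(x_n+sth_n)(th_n)\,ds$ and using the \emph{uniform} continuity of $f'$ on the $U$-bounded set $\{\,x_n+sth_n:n\in\mathbb N,\ 0\le s\le1,\ 0<t\le t_0\,\}$, one obtains
\[
\sup_n\Bigl\|\tfrac1t\bigl(f(x_n+th_n)-f(x_n)\bigr)-f'(x_n)(h_n)\Bigr\|\ \le\ \Bigl(\sup_{n,\,0\le s\le1}\|f'(x_n+sth_n)-f'(x_n)\|\Bigr)\cdot\sup_n\|h_n\| ,
\]
and the right-hand side tends to $0$ as $t\to0^+$. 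Thus $(f'(x_n)(h_n))_n$ is, uniformly in $n$, the limit as $t\to0^+$ of sequences each convergent in $n$; an $\varepsilon/3$ argument makes it norm Cauchy, hence convergent. This is the only place the hypothesis $f\in C^{1u}(U,Y)$ is used.

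Next, $(\mathrm{ii})\Rightarrow(\mathrm{iv})$: applying $(\mathrm{ii})$ to $(x_n)_n$ against the thinned sequence ($h_n$ for $n$ odd, $0$ for $n$ even), which lies in $\ell_p^w(X)$ hence is weakly $p$-Cauchy, yields a norm-convergent sequence whose even terms vanish and whose odd terms are $f'(x_{2k-1})(h_{2k-1})$; comparing with the limit of $(f'(x_n)(h_n))_n$ given by $(\mathrm{ii})$ directly forces $f'(x_n)(h_n)\to0$. The implication $(\mathrm{iii})\Rightarrow(\mathrm{iv})$ is trivial; conversely, if $(\mathrm{iii})$ fails for $U$-bounded weakly $p$-Cauchy $(x_m)_m$ and $(h_n)_n\in\ell_p^w(X)$ with $\sup_m\|f'(x_m)(h_n)\|>\varepsilon$ for all $n$ (after thinning $(h_n)_n$), pick $m(n)$ with $\|f'(x_{m(n)})(h_n)\|>\varepsilon$: if $(m(n))_n$ is unbounded, a subsequence with $(m(n_k))_k$ strictly increasing gives $U$-bounded weakly $p$-Cauchy $(x_{m(n_k)})_k$ and $(h_{n_k})_k\in\ell_p^w(X)$, contradicting $(\mathrm{iv})$; if $(m(n))_n$ is bounded, some $m^\ast$ recurs on an infinite set $S$ and the constant sequence $(x_{m^\ast})$ with $(h_n)_{n\in S}$ contradicts $(\mathrm{iv})$. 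For $(\mathrm{iv})\Rightarrow(\mathrm{v})$: if $K\subset U$ is $U$-bounded and weakly $p$-precompact but $f'(K)$ is not uniformly $p$-convergent, choose $(h_n)_n\in\ell_p^w(X)$, $\varepsilon>0$ and $x_n\in K$ with $\|f'(x_n)(h_n)\|>\varepsilon$; a weakly $p$-Cauchy subsequence $(x_{n_k})_k$ of $(x_n)_n$, together with $(h_{n_k})_k\in\ell_p^w(X)$, contradicts $(\mathrm{iv})$.

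The remaining implication $(\mathrm{v})\Rightarrow(\mathrm{i})$ is where I expect the real difficulty. Given $(x_n)_n$ $U$-bounded and weakly $p$-Cauchy, the Mean Value Theorem \cite[Theorem 6.4]{ch} gives $\|f(x_n)-f(x_m)\|\le\|f'(c_{n,m})(x_n-x_m)\|$ with $c_{n,m}\in I(x_n,x_m)$; if $(f(x_n))_n$ were not Cauchy, fix $\varepsilon>0$ and increasing $(n_k)_k,(m_k)_k$ with $\|f'(c_{n_k,m_k})(x_{n_k}-x_{m_k})\|>\varepsilon$. Write $c_{n_k,m_k}=(1-t_k)x_{n_k}+t_kx_{m_k}$ with $t_k\in[0,1]$ and pass to a subsequence with $t_k\to t$; then
\[
c_{n_k,m_k}=\bigl[(1-t)x_{n_k}+t\,x_{m_k}\bigr]+(t-t_k)(x_{n_k}-x_{m_k}),
\]
the bracket being weakly $p$-Cauchy (a fixed linear combination of the weakly $p$-Cauchy sequences $(x_{n_k})_k,(x_{m_k})_k$) and the remainder lying in $\ell_p^w(X)$ (since $(x_{n_k}-x_{m_k})_k\in\ell_p^w(X)$ and $(t-t_k)_k$ is bounded); hence $(c_{n_k,m_k})_k$ is weakly $p$-Cauchy, so $K:=\{c_{n_k,m_k}:k\in\mathbb N\}$ is weakly $p$-precompact, and it is $U$-bounded, being contained in the convex hull of $\{x_n\}_n$ (as in the proof of Proposition \ref{p8}). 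By $(\mathrm{v})$, $f'(K)$ is uniformly $p$-convergent; testing it on $(x_{n_k}-x_{m_k})_k\in\ell_p^w(X)$ gives $\sup_j\|f'(c_{n_j,m_j})(x_{n_k}-x_{m_k})\|\to0$, hence $\|f'(c_{n_k,m_k})(x_{n_k}-x_{m_k})\|\to0$, a contradiction; so $(f(x_n))_n$ converges and $f\in C^p_{wsc}(U,Y)$. The crux is exactly the step from ``$c_{n,m}$ lies in the convex hull of the weakly $p$-Cauchy sequence $(x_n)_n$'' to ``$\{c_{n_k,m_k}\}$ is weakly $p$-precompact'': a convex combination with \emph{variable} coefficients need not preserve weak $p$-Cauchyness, and the device of freezing $t_k\to t$ so as to split off an $\ell_p^w$-valued error is what makes it go through. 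The calculus step $(\mathrm{i})\Rightarrow(\mathrm{ii})$, which genuinely needs the uniform continuity built into $C^{1u}(U,Y)$, is the other point requiring care.
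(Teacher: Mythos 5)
Your proposal is correct, and for four of the five implications it is essentially the paper's own argument: (i)$\Rightarrow$(ii) via uniform continuity of $f'$ on a $U$-bounded enlargement of $\{x_n\}$ (you use the integral form of the mean value inequality and let $t\to 0^+$, the paper fixes a single $\delta$ from uniform continuity and applies the Mean Value Theorem of \cite[Theorem 6.4]{ch}; the substance is identical), the interleaving-with-zeros trick to pass from (ii) to (iv) resp.\ (iii), and the approximate-supremum selection combined with weak $p$-precompactness for (iv)$\Rightarrow$(v); your routing (ii)$\Rightarrow$(iv) together with (iii)$\Leftrightarrow$(iv), instead of the paper's chain (ii)$\Rightarrow$(iii)$\Rightarrow$(iv), is immaterial. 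The one genuine difference is (v)$\Rightarrow$(i): the paper simply asserts that the mean-value points $(c_{i,j})$ form a weakly $p$-Cauchy, $U$-bounded sequence and then invokes (v), whereas you argue by contradiction with both index sequences $(n_k)$, $(m_k)$ strictly increasing and decompose $c_{n_k,m_k}=[(1-t)x_{n_k}+t\,x_{m_k}]+(t-t_k)(x_{n_k}-x_{m_k})$, the bracket being weakly $p$-Cauchy and the error lying in $\ell_p^w(X)$; this supplies exactly the justification of weak $p$-precompactness of $\{c_{n_k,m_k}\}$ that the paper's wording glosses over, and you are right that a segment point with variable coefficient between two weakly $p$-Cauchy sequences is not obviously weakly $p$-Cauchy. (In fact, once both index sequences are strictly increasing, freezing $t_k\to t$ is not even needed: writing $c_{n_k,m_k}=x_{n_k}+t_k(x_{m_k}-x_{n_k})$, the second summand is already in $\ell_p^w(X)$ because $(t_k)$ is bounded and $(x_{m_k}-x_{n_k})_k\in\ell_p^w(X)$, and a weakly $p$-Cauchy sequence plus an $\ell_p^w$-sequence is weakly $p$-Cauchy; your version is equally valid.) So: correct, same overall scheme as the paper and as \cite[Theorem 2.1]{ce2}, with a more careful treatment of the only delicate step.
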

\begin{proof}
(i) $ \Rightarrow $ (ii) Let $ (x_{n})_{n} $ be a $ U $-bounded  weakly $ p $-Cauchy sequence and let $ (h_{n})_{n} $ be a weakly $ p $-Cauchy sequence in $ X. $\ Without loss
of generality, we assume that $ (h_{n})_{n}$ is bounded.\ Consider $ B := \lbrace x_{n}: n \in \mathbb{N} \rbrace $ and let $ d := min\lbrace 1, dist(B, \partial U)\rbrace. $\ It is easy to show that the set
$$ B^{\prime}:=B+\frac{d}{2}B_{X}\subset U$$
is also $ U $-bounded.\ Since $ f \in C^{1u}(U, Y ),$ $ f^{\prime} $ is uniformly continuous on $ B^{\prime}.$\ Hence, for given $ \varepsilon >0, $ there exists $ 0 <\delta <\frac{d}{4} $ such that if $t_{1} ,t_{2}\in B^{\prime}$
satisfy $ \parallel t_{1} -t_{2}\parallel <2\delta,$ then
\begin{equation}\label{eq}
\parallel f^{\prime}(t_{1})-f^{\prime}(t_{2}) \parallel <\frac{\varepsilon}{4}.
\end{equation}
If $ c\in I(x_{n},x_{n}+\delta h_{n}) $ for some $ n\in \mathbb{N}, $ then
$$ \parallel c-x_{n} \parallel \leq \delta \parallel h_{n} \parallel <\delta <2\delta <\frac{d}{2},$$
and so,
$$c=x_{n} +(c-x_{n})\in B^{\prime}=B+\frac{d}{2}B_{X} $$
As an immediate consequence of the Mean Value Theorem {\rm (\cite[Theorem 6.4]{ch})}, and formula (\ref{eq}), we obtain
\begin{flushleft}
$ \parallel f^{\prime}(x_{n}) ( \delta h_{n})-f(x_{n}+
\delta h_{n})+f(x_{n})\parallel $
\end{flushleft}
\begin{center}
$ \leq \displaystyle\sup_{c\in I(x_{n},x_{n}+\delta h_{n})} \parallel f^{\prime} (c) - f^{\prime} (x_{n}) \parallel \parallel\delta h_{n}\parallel
\leq\frac{\varepsilon \delta}{4} .$
\end{center}
Similarly,
\begin{flushleft}
$ \parallel f(x_{m}+\delta h_{m})-f(x_{m})- f^{\prime}(x_{m}) ( h_{m})\parallel $
\end{flushleft}
\begin{center}
$
\leq \displaystyle\sup_{c\in I(x_{m},x_{m}+\delta h_{m})} \parallel f^{\prime} (c) - f^{\prime} (x_{m}) \parallel \parallel\delta h_{m}\parallel
\leq\frac{\varepsilon \delta}{4}.\
$
\end{center}
On the other hand, the sequences $ (x_{n}+\delta h_{n})_{n} $ and $ (x_{n})_{n} $ are $ U$-bounded and weakly $ p $-Cauchy in $ U. $\ Hence, by the hypothesis the
sequences $ ( f (x_{n} +\delta h_{n}))_{n} $ and $ ( f (x_{n}))_{n} $ are norm convergent in $ Y. $\ Hence, we can find $ n_{0}\in \mathbb{N} $ so that for $ n,m > n_{0} :$
\begin{flushleft}
$ \parallel f(x_{n}+\delta h_{n})-f(x_{m}+\delta h_{m})\parallel <\frac{\varepsilon \delta}{4},~~~~~~~~~~~~~ \parallel f(x_{n})-f(x_{m})\parallel <\frac{\varepsilon \delta}{4} $
\end{flushleft}
So, for $ n,m > n_{0} ,$ we have\
$$\parallel f^{\prime}(x_{n}) (h_{n}) - f^{\prime}(x_{m}) (h_{m})\parallel< \varepsilon.$$\
(ii) $ \Rightarrow $ (iii) Let $ (x_{n})_{n} $ be a $ U $-bounded weakly $ p $-Cauchy sequence and let $ (h_{n})_{n} $ be a weakly $ p $-summable sequence in $ X. $\ By the part $ \rm{(ii)} $, for every $ h \in X, $ the set $ \lbrace f^{\prime} (x_{n})(h) :n\in \mathbb{N}\rbrace$ is bounded in $ Y. $\ On the other hand, there exists a subsequence $ (x_{m_{k}}) _{k}$ of $ (x_{m})_{m} $ in $ U $ such that
$$ \parallel f^{\prime} (x_{m_{k}})(h_{k}) \parallel \geq \sup_{m}\parallel f^{\prime} (x_{m})(h_{k}) \parallel-\frac{1}{k} ~~~~(k\in \mathbb{N}).$$
Since the sequences $ (x_{m_{k}}) _{k}$ in $ U $ and $ (h_{1},0,h_{2},0,h_{3},0,\cdot\cdot\cdot) $ in $ X $
are weakly $ p $-Cauchy, the sequence
$$ (f^{\prime}(x_{m_{1}})(h_{1}),0,f^{\prime}(x_{m_{2}})(h_{2}),0, f^{\prime}(x_{m_{3}})(h_{3}),0,\cdot\cdot\cdot) $$
converges in $ Y. $\ Therefore, $ \displaystyle\lim_{k} f^{\prime}(x_{m_{k}})(h_{k})=0.$\ Hence, we have:
$$ \lim_{k} \sup_{m}\parallel f^{\prime}(x_{m}) (h_{k}) \parallel =0.$$
(iii) $ \Rightarrow $ (iv) is obvious.\\
(iv) $ \Rightarrow $ (v) Let $ K$ be a weakly $ p $-precompact and $ U $-bounded set.\ It is clear that, for every $ h \in X, $ the set $ f^{\prime}(K)(h) $
is bounded
in $ Y. $\ Let $ (h_{n}) _{n}$ be a weakly $ p $-summable sequence in $ X. $\
If $ (h_{n_{k}})_{k} $ is a subsequence of $ (h_{n}) _{n},$ then for every $ k\in \mathbb{N}, $ there exists $ a_{k}\in K $ such that
$$ \sup_{a \in K}\parallel f^{\prime}(a) (h_{n_{k}}) \parallel < \parallel f^{\prime}(a_{k}) (h_{n_{k}}) \parallel+\frac{1}{k}. $$
Since $ K $ is a weakly $ p $-precompact set, the sequence $ (a_{k})_{k} $ admits a weakly $ p $-Cauchy subsequence $ (a_{k_{r}}) _{r}.$\ Hence, by the hypothesis, 
$$ \displaystyle\lim_{r} \parallel f^{\prime}(a_{k_{r}}) (h_{n_{k_{r}}}) \parallel=0. $$
Therefore we have,
$ \displaystyle\lim_{r}\sup_{a\in K} \parallel f^{\prime}(a) (h_{n_{k_{r}}}) \parallel=0. $\ Hence, every subsequence of
$ ( \displaystyle \sup_{a \in K} \parallel f^{\prime}(a) (h_{n}) \parallel) _{n}$
has a subsequence converging to $ 0. $\ Therefore, the sequence itself converges to $ 0, $ that is, $ \displaystyle \lim_{n}\sup_{a \in K} \parallel f^{\prime}(a) (h_{n}) \parallel =0.$\\
(v) $ \Rightarrow $ (i) Let $ (x_{n})_{n} $ be a $ U $-bounded and weakly $ p $-Cauchy sequence.\ Since $ U$ is convex, the segment $ I(x_{n}, x_{m}) $ is contained in $ U$
for all $ n,m \in \mathbb{N}. $\ By the Mean Value Theorem {\rm (\cite[Theorem 6.4]{ch})},
there exists $ c_{nm} \in I(x_{n}, x_{m}) $ such that
$$ \parallel f(x_{n}) -f(x_{m}) \parallel \leq \parallel f^{\prime}(c_{n,m}) (x_{n}-x_{m}) \parallel \leq \sup_{i,j \in \mathbb{N}}\parallel f^{\prime}(c_{i,j}) (x_{n}-x_{m}) \parallel .$$
Since $ (c_{i,j})_{i,j} $ is a weakly $ p $-Cauchy and $ U $-bounded sequence, the part $\rm{(v)}$ implies that
$$ \lim_{n,m}\sup_{i,j \in \mathbb{N}}\parallel f^{\prime}(c_{i,j}) (x_{n}-x_{m}) \parallel=0. $$
Therefore, $\displaystyle \lim_{n,m\rightarrow\infty}\parallel f(x_{n}) -f(x_{m}) \parallel =0.$
\end{proof}
\begin{ex}\label{e5} 
Let $ h\in C^{1}(\mathbb{R}) $ and $ 1<p<2. $\ Define $ f:\ell_{p^{\ast}}\rightarrow \mathbb{R} $ by 
$ f((x_{n})_{n}) = \displaystyle\sum _{n=1}^{\infty}\frac{h(x_{n})}{2^{n}}.$\
The same argument as in the {\rm (\cite[Example 2.4]{ce4})}, shows that $ f $ is differentiable so that
$ f^{\prime} ((x_{n})_{n})=(\frac{h^{\prime}(x_{n})}{2^{n}})_{n} \in \ell_{p}.$\  By Pitt's Theorem {\rm (\cite[Theorem 2.1.4]{ce4})},
$ f^{\prime}:\ell_{p^{\ast}} \rightarrow \ell_{p}$ is compact and so, $ f^{\prime}(B_{\ell_{p^{\ast}}}) $ is a relatively compact set in $L(\ell_{p^{\ast}},\mathbb{R})= C_{p} (\ell_{p^{\ast}},\mathbb{R}).$\ Therefore,  the part $ \rm{(iv)} $ of Proposition $ \rm{\ref{p1}} $, yields  that $ f^{\prime}(B_{\ell_{p^{\ast}}}) $ is a uniformly $ p $-convergent set in $ L(\ell_{p^{\ast}}, \mathbb{R}) .$\ 
Hence, 
 Proposition $ \rm{ \ref{p8}} $ implies that $ f$ is weakly $ p $-sequentially continuous.\ On the other nand, $ \frac{1}{2} B_{\ell_{p^{\ast}}} $ is a $ B_{\ell_{p^{\ast}}} $-bounded set.\ Hence, by Theorem $ \rm{\ref{t3}} $ $ f^{\prime} (\frac{1}{2} B_{\ell_{p^{\ast}}}) $ is a $ p $-(V) set in $ \ell_{p}. $
\end{ex}
\section{Factorization theorem through a $ p $-convergent operator}
Here, 
for given a  mapping $ f : U \rightarrow Y ,$  we show that $ f $ is differentiable so that $ f^{\prime} $ takes $ U $-bounded sets
into uniformly $ p $-convergent sets if and only if it happens $ f=g\circ S, $ where $  S$ is a  $ p $-convergent
 operator  from $ X $ into a suitable Banach space $ Z$ and $ g : S(U) \rightarrow Y $  is a G$ \hat{a} $teaux differentiable mapping with some additional properties.\
\begin{thm}\label{t4}  Let $ U\subset X$ be an open convex and $ 1 \leq p \leq \infty. $\
If $ f:U\rightarrow Y $ is a mapping,
then the following assertions are equivalent:\\
$ \rm{(a)} $ $ f $ is differentiable so that $ f^{\prime} $ takes $ U $-bounded sets into uniformly $ p $-convergent sets and $ f$ is weakly $ p $-sequentially continuous.\\
$ \rm{(b)} $ There exist a Banach space $ Z,$ an operator $ S\in C_{p}(X, Z ) $ and a mapping
$ g:S(U) \rightarrow Y$ such that:\\
$ \rm{ (i)} $ $ f(x) = g(S(x)) $ for all $ x \in U.$\\
$ \rm{ (ii)} $ $ g \in D_{\mathcal{M}}(S(x), Y ) $ for every $ x \in U, $ where
\begin{center}
$ \mathcal{M}:=\lbrace S(B) :B $ is a $ U $-bounded subset of $ X \rbrace .$
\end{center}\
$ \rm{ (iii)} $ $ g^{\prime} $ is bounded on $ S(B) $ for every $ U $-bounded subset $ B\subset X.$\\
Moreover, if this factorization holds, $ f $ is weakly $p $-sequentially continuous.
\end{thm}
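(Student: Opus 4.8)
My plan is to prove the two implications separately. The implication $(b)\Rightarrow(a)$ is a short computation: given $f=g\circ S$ with $S\in C_{p}(X,Z)$ and $g$ as in (i)--(iii), the chain rule (legitimate here because $S$ is linear and bounded, and for every $U$-bounded $B$ the direction set $S(B)$ is a member of $\mathcal{M}$, so the $\mathcal{M}$-difference quotient of $g$ at $S(x)$ controls the corresponding difference quotient of $f$ at $x$) shows that $f$ is differentiable with $f^{\prime}(x)=g^{\prime}(S(x))\circ S$. Then, for a $U$-bounded $B$ and $(x_{n})_{n}\in\ell_{p}^{w}(X)$, one has $\Vert S(x_{n})\Vert\rightarrow 0$ since $S$ is $p$-convergent, while $\sup_{x\in B}\Vert g^{\prime}(S(x))\Vert\le C_{B}<\infty$ by (iii), so
$$\sup_{x\in B}\Vert f^{\prime}(x)(x_{n})\Vert\le C_{B}\,\Vert S(x_{n})\Vert\rightarrow 0;$$
thus $f^{\prime}(B)$ is uniformly $p$-convergent. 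Since $f$ is differentiable and maps $U$-bounded sets into uniformly $p$-convergent sets, Proposition~\ref{p8} gives $f\in C_{wsc}^{p}(U,Y)$, which provides both the weak $p$-sequential continuity in $(a)$ and the closing \emph{moreover} assertion.

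For $(a)\Rightarrow(b)$ I would obtain $Z$ and $S$ by a standard quotient-and-completion argument. Fix an increasing sequence $(B_{n})_{n}$ of $U$-bounded sets with $\bigcup_{n}B_{n}=U$, for instance $B_{n}=\{x\in U:\Vert x\Vert\le n,\ \operatorname{dist}(x,X\setminus U)\ge 1/n\}$; these are convex, and every $U$-bounded set lies in some $B_{N}$. Each $M_{n}:=f^{\prime}(B_{n})$ is a bounded uniformly $p$-convergent subset of $L(X,Y)$; put $c_{n}:=1+\sup_{T\in M_{n}}\Vert T\Vert$ and $W:=\bigcup_{n}\frac{1}{2^{n}c_{n}}M_{n}$. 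A ``uniform tail'' estimate shows that $W$ is bounded, with $\sup_{T\in W}\Vert T\Vert\le\frac{1}{2}$, and still uniformly $p$-convergent: for $(x_{k})_{k}\in\ell_{p}^{w}(X)$ the quantity $\frac{1}{2^{n}c_{n}}\sup_{T\in M_{n}}\Vert Tx_{k}\Vert$ is at most $2^{-n}\sup_{k}\Vert x_{k}\Vert$ uniformly in $k$ and tends to $0$ as $k\rightarrow\infty$ for each fixed $n$, whence $\sup_{T\in W}\Vert Tx_{k}\Vert\rightarrow 0$. Set $|x|:=\sup_{T\in W}\Vert Tx\Vert$, a seminorm with $|x|\le\frac{1}{2}\Vert x\Vert$, let $Z$ be the completion of $X/\{x:|x|=0\}$, and let $S:X\rightarrow Z$ be the canonical map. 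Then $S$ is bounded, and $\Vert S(x_{k})\Vert_{Z}=|x_{k}|\rightarrow 0$ for $(x_{k})_{k}\in\ell_{p}^{w}(X)$, so $S\in C_{p}(X,Z)$.

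Now put $g(S(x)):=f(x)$ for $x\in U$. This is well defined: if $x,y\in U$ and $S(x)=S(y)$, then $T(x-y)=0$ for all $T\in W$; by convexity of $U$ the segment $I(x,y)$ lies in $U$, each of its points lies in some $B_{n}$, hence $f^{\prime}(c)(x-y)=0$ for every $c\in I(x,y)$, and the Mean Value Theorem gives $\Vert f(x)-f(y)\Vert\le\sup_{c\in I(x,y)}\Vert f^{\prime}(c)(x-y)\Vert=0$. Applying the same reasoning to the pairs $x+th$, $y+th$ (which have equal image under $S$) and differentiating in $t$ shows that $S(x)=S(y)$ also forces $f^{\prime}(x)=f^{\prime}(y)$; hence $g^{\prime}(S(x))\colon S(h)\mapsto f^{\prime}(x)(h)$ is a well-defined linear map on the dense subspace $S(X)\subset Z$. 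Choosing $N$ with $x\in B_{N}$, so that $\frac{1}{2^{N}c_{N}}f^{\prime}(x)\in W$, we obtain $\Vert f^{\prime}(x)(h)\Vert\le 2^{N}c_{N}\,|h|=2^{N}c_{N}\Vert S(h)\Vert_{Z}$, so $g^{\prime}(S(x))$ extends to an operator in $L(Z,Y)$ with $\Vert g^{\prime}(S(x))\Vert\le 2^{N}c_{N}$; since every $U$-bounded $B$ is contained in a single $B_{N}$, this gives (iii). For (ii), given a $U$-bounded $B$, a point $x\in U$, and $\varepsilon$ small enough (uniformly in $b\in B$) that $S(x)+\varepsilon S(b)=S(x+\varepsilon b)\in S(U)$, one has
$$\frac{g(S(x)+\varepsilon S(b))-g(S(x))-g^{\prime}(S(x))(\varepsilon S(b))}{\varepsilon}=\frac{f(x+\varepsilon b)-f(x)-f^{\prime}(x)(\varepsilon b)}{\varepsilon},$$
and the right-hand side tends to $0$ as $\varepsilon\rightarrow 0$, uniformly in $b\in B$, by differentiability of $f$; hence $g\in D_{\mathcal{M}}(S(x),Y)$, while (i) holds by construction.

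The crux is the construction of $(Z,S)$: in general $f^{\prime}(U)$ is neither bounded nor uniformly $p$-convergent, so the derivatives must be rescaled over the exhaustion $(B_{n})$ before forming the seminorm, and the weights have to meet two competing demands --- $S$ is $p$-convergent only if $W$ is uniformly $p$-convergent (favouring rapidly decaying weights), whereas keeping $g^{\prime}$ bounded on each $S(B)$ forbids the weights from decaying too fast relative to the norms $\sup_{T\in M_{n}}\Vert T\Vert$; the choice $1/(2^{n}c_{n})$, together with the fact that each $U$-bounded set is contained in one $B_{N}$, reconciles them. A secondary technical point, handled throughout by working with $\mathcal{M}$-differentiability rather than Fr\'echet differentiability, is that $S(U)$ is only a convex subset of $Z$, so some care is needed to ensure the increments $S(x)+\varepsilon S(b)$ remain in the domain of $g$.
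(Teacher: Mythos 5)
Your proposal is correct and follows essentially the same route as the paper: the same exhaustion of $U$ by the sets $\{x\in U: d(x,\partial U)>1/n\}\cap nB_X$, the same rescaling of $f'$ over that exhaustion to get a bounded uniformly $p$-convergent family, the same quotient-by-the-null-space-of-the-sup-seminorm and completion to build $Z$ and the $p$-convergent quotient map $S$, and the same Mean Value Theorem argument for well-definedness of $g$ and the bounds giving (ii) and (iii). Your only deviations are cosmetic improvements: the weights $1/(2^n c_n)$ avoid the paper's possible division by $\Vert f'\Vert_{W_r}=0$, and in (b)$\Rightarrow$(a) you invoke Proposition~\ref{p8} for the weak $p$-sequential continuity where the paper instead argues directly via uniform continuity of $g$ on each $S(W_r)$.
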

\begin{proof}
(a) $ \Rightarrow (b) $ For every $ n\in \mathbb{N}, $ put
$$ W_{n}=\lbrace x \in U: d(x, \partial U)>\frac{1}{n} \rbrace \bigcap nB_{X} .$$
By the hypothesis for every $ r\in \mathbb{N}, $ $ f^{\prime}(\frac{W_{r}}{r \parallel f^{\prime}\parallel_{W_{r}}}) $ is a uniformly $ p $-convergent set.\  Now, we define $ K:=\displaystyle\bigcup _{r=1}^{\infty} \frac{f^{\prime}(W_{r})}{r\parallel f^{\prime}\parallel_{W_{r}}}.$\ We claim that, $ K $ is a uniformly $ p $-convergent set.\ Indeed, for every $ N\in \mathbb{N}, $ we define $ A_{N} := \displaystyle\bigcup_{r\leq N}\frac{f^{\prime}(W_{r})}{r \parallel f^{\prime} \parallel}_{W_{r}}$ and $ B_{N} := \displaystyle\bigcup_{r> N}\frac{f^{\prime}(W_{r})}{r \parallel f^{\prime} \parallel}_{W_{r}}.$\ Since
$ A_{N} $ is a uniformly $ p $-convergent set, it is enough to show that
$ B_{N} $ is a uniformly $ p $-convergent set.\ For this purpose, let $ (x_{n})_{n}\in \ell_{p}^{w} (X)$  and $ M= \displaystyle\sup _{n} \parallel x_{n} \parallel.$\ If $ T \in B_{N} ,$ then there are $ r>N $ and $ x\in W_{r} $ so that $ T=\frac{f^{\prime}(x)}{r\parallel f^{\prime} \parallel_{W_{r}}} .$\ It is clear that, $ \parallel T\parallel\leq\frac{1}{r} <\frac{1}{N}.$\ Hence, for each $ N\in \mathbb{N} $ we have: 
$$ \lim_{n\rightarrow \infty}\sup_{T\in K}\parallel T(x_{n}) \parallel =max\lbrace \lim_{n\rightarrow \infty}\sup_{T \in A_{N}} \parallel T(x_{n})\parallel ,\lim_{n\rightarrow \infty}\sup_{T \in B_{N}} \parallel T(x_{n})\parallel \rbrace\leq \frac{M}{N} .$$
Therefore, $ \displaystyle \lim_{n\rightarrow \infty}\sup_{T\in K}\parallel T(x_{n}) \parallel =0 $ and so, $ K $ is a uniformly $ p $-convergent set.
Now,
as in the proof of {\rm (\cite[Theorem 2.1]{ce3})}, let
$$ V_{K}:=\lbrace x\in X: \displaystyle\sup _{\phi\in K}\parallel \phi(x) \parallel _{Y}=0 \rbrace $$
and $ G:=\frac{X}{V_{k}} .$\ If $ S:X\rightarrow G $ is the quotient map $ G, $ then $ G $ is normed space
respect the norm $ \parallel S(x) \parallel =\displaystyle\sup _{\phi\in K}\parallel \phi(x) \parallel _{Y}, ~~~\forall x \in X.$\ Suppose that $ Z $ is the completion of $ G .$\ Let $ (x_{n}) _{n}\in \ell_{p}^{w}(X),$ since $ K$ is a uniformly $ p $-convergent set, $ \parallel S(x_{n}) \parallel =\displaystyle\sup _{\phi\in K}\parallel \phi(x_{n}) \parallel\rightarrow 0. $\ Hence, $ S\in C_{p} (X,Z).$\
Now we define $ g : S(U) \rightarrow Y $ by $ g(S(x))=f(x) ,~~~~ x\in U.$\ In the first, we proved that $ g $ is well defined.\ Suppose that $ \parallel S(x-y) \parallel=0. $\ Since the span of $ K$ contains $ f^{\prime}(U) ,$ we have
$$ \parallel f^{\prime}(c) (x-y) \parallel =0 ~~~ ~~~~~(c \in U).$$
By using the Mean Value Theorem {\rm (\cite[Theorem 6.4]{ch})},
$$\displaystyle\sup _{c\in I(x,y)}\parallel f(x)- f(y) \parallel\leq \parallel f^{\prime}(c) (x-y) \parallel =0,$$ and so $ f(x)=f(y) .$\ Therefore $g$ well defined.\ Now, we show that $ g $ is G$ \hat{a} $teaux differentiable.\ For given $ x,y \in U, $
\begin{equation}\label{eqeq}
\displaystyle\lim_{t\rightarrow 0}\frac{g(S(x)+tS(y))-g(S(x))}{t}=\lim_{t\rightarrow 0}\frac{f(x+ty)-f(x)}{t} =f^{\prime}(x)(y)
\end{equation}
where $\mid t \mid $ is sufficiently small so that $ x + ty \in U. $\ For $ x\in U $ fixed, the mapping
$ g^{\prime}(S(x)) :G\rightarrow Y$ given by $g^{\prime}(S(x)) (S(y)) =f^{\prime}(x)(y) ~~~~~(y \in X) $
is linear.\ Choosing $ r \in \mathbb{N} $ so that $ x \in W_{r}, $ we have
\begin{eqnarray*}
\parallel g^{\prime}(S(x)) (S(y)) \parallel &=&\\ \parallel f^{\prime}(x)(y) \parallel
&\leq & r\parallel f^{\prime}\parallel_{W_{r}} \parallel S(y) \parallel.
\end{eqnarray*}
Hence $g^{\prime} (S(x)) $ is continuous and may be extended to the completion $ Z$ of $ G. $\ Hence $ g$ is
G$ \hat{a} $teaux differentiable.\ Moreover, since $ f $ is differentiable, for every $ U $-bounded set
$ B, $ the limit in (\ref{eqeq}) exists uniformly  to $ S(y) $ in $ S(B). $\ So,
$ g \in D_{\mathcal{M}}(S(x), Y ) $ for every $ x \in U, $ where
$$ \mathcal{M}=\lbrace S(B) : B ~ is ~ a~ U-bounded ~ subset ~ of ~X \rbrace $$
and (ii) is proved.\\
From the inequality $\rm{ (ii)}, $ we have
\begin{center}
$ \parallel g^{\prime}(S(x))\parallel =\displaystyle\sup_{\parallel S(y)\parallel \leq1} \parallel g^{\prime} (S(x)) (S(y)) \parallel \leq r\parallel f^{\prime}\parallel_{W_{r}}, ~~(x \in W_{r})$
\end{center}
and this implies (iii).\\
(b) $ \Rightarrow $ (a). Assume that there exists a Banach space $ Z$ and  $ S\in C_{p}(X,Z), $
and a mapping $ g : S(U) \rightarrow Y $ satisfying $ (b). $\ Obviously, $ f$ is differentiable.\ We claim that $ f^{\prime} $ takes $ U $-bounded sets into uniformly $ p $-convergent sets.\ For this purpose, suppose that $ B $ is a
$ U$-bounded set and $ (x_{n})_{n}\in \ell_{p}^{w}(X).$\ Since $ S \in C_{p}(X,Z), $ we have
\begin{eqnarray*}
\displaystyle\sup _{x\in B} \parallel f^{\prime}(x)(x_{n})\parallel = \displaystyle \sup_{x\in B}\parallel g^{\prime}(S(x))(S(x_{n}))\parallel
&\leq &\\ \displaystyle \sup_{x\in B}\parallel g^{\prime}(S(x))\parallel \parallel S(x_{n})\parallel\rightarrow 0.
\end{eqnarray*}
So, $ f^{\prime}(B) $ is a uniformly $ p $-convergent set.\
Given $ r\in \mathbb{N},$ the mapping $ g $ is uniformly continuous on $ S(W_{r}) .$ Indeed, for $ x, y\in W_{r} ,$
we have
\begin{flushleft}
$ \parallel g(S(x))-g(S(y)) \parallel=\parallel f(x) -f(y) \parallel \leq \displaystyle \sup_{c\in W_{r}}\parallel f^{\prime}(c)(x-y) \parallel $
\end{flushleft}
\begin{center}
$ \leq r\parallel f^{\prime}\parallel_{W_{r}} \displaystyle \sup_{\phi\in K }\parallel \phi(x-y)\parallel=r\parallel f^{\prime}\parallel_{W_{r}} \parallel S(x-y)\parallel $
\end{center}
where we have used that $ W_{r} $ is a convex set.\ Hence, if $ (x_{n})_{n} $ is a $ U $-bounded  weakly
$ p $-Cauchy sequence, then the sequence $ (S(x_{n}))_{n}$ in $S(W_{r}), $ for a suitable index
$ r, $ is norm Cauchy and so, $ (f(x_{n}))_{n} = (g(S(x_{n})))_{n} $ is also  norm
Cauchy.\ Hence, $ f$ is weakly $ p $-sequentially continuous.
\end{proof}
\begin{ex}\label{e6} 
Let $ h\in C^{1}(\mathbb{R}) .$\ Define $ f:c_{0}\rightarrow \mathbb{R} $ by 
$ f((x_{n})_{n}) = \displaystyle\sum _{n=1}^{\infty}\frac{h(x_{n})}{2^{n}}.$\
It is easy verify that $ f $ is differentiable so that
$ f^{\prime} ((x_{n})_{n})=(\frac{h^{\prime}(x_{n})}{2^{n}})_{n}\in \ell_{1}.$\ Since
$ f^{\prime}:c_{0} \rightarrow L(c_{0},\mathbb{R})=C_{p}(c_{0},\mathbb{R})$ is compact, $ f^{\prime}(B_{c_{0}}) $ is a relatively compact set in $ C_{p} (c_{0},\mathbb{R})$ and so,  $ f^{\prime}(B_{c_{0}}) $ is a uniformly $ p $-convergent set in $ \ell_{1}.$\ 
Therefore, 
 Proposition $ \rm{ \ref{p8}} $ implies that $ f$ is weakly $ p $-sequentially continuous.\ Hence,  there exist a Banach space $ Z,$ an operator $ S\in C_{p}(c_{0}, Z ) $ and a G$ \hat{a} $teaux differentiable mapping 
$ g:S(B_{c_{0}}) \rightarrow \mathbb{R}$ such that $ f=g\circ S $ with some additional properties.
\end{ex}


\end{document}